\newtheorem{thm}{Theorem}[section]
\newtheorem{cor}[thm]{Corollary}
\newtheorem{lem}[thm]{Lemma}
\newtheorem{prop}[thm]{Proposition}
\theoremstyle{definition}
\newtheorem{defn}[thm]{Definition}
\theoremstyle{remark}
\newtheorem{rem}[thm]{Remark}
\numberwithin{equation}{section}
\newcommand{\R}{\mathbb R}
\newcommand{\N}{\mathbb N}
\newcommand{\Om}{\mathcal O}
\newcommand{\simuleq}[1]{\left\{\begin{aligned}#1\end{aligned}\right.}%
\begin{document}%

\title[]{HOPF-LAX FORMULA  and generalized characteristics}%
\author{NGUYEN HOANG}\thanks{This research is partially supported by
the  NAFOSTED, Vietnam, under grant \# 101.02-2013.09 and by the project DHH2013-03-35.
}

\address{Department of Mathematics, College of Education, Hue University, 32 LeLoi, Hue, Viet Nam}%
\email{nguyenhoanghue@gmail.com     or: nguyenhoang@hueuni.edu.vn}%

\keywords{Hamilton-Jacobi equation, Hopf-Lax formula, generalized characteristics, viscosity solution, semiconvexity, semiconcavity}%

\begin{abstract}
We study some differential properties of viscosity solution  for Hamilton - Jacobi equations defined by Hopf-Lax formula 
$u(t,x)=\min_{y\in \R^n} \big\{ \sigma
(y)+tH^*\big (\frac {x-y}{t}\big)\big \}.$ A generalized form of characteristics of the Cauchy problem is taken into account the context. Then we examine the strip of differentiability of the viscosity solution given by the function $u(t,x).$
\end{abstract}
\maketitle
\section{Introduction}

The global studies of Hamilton-Jacobi equations have spent more than a half century with several notions of solutions including classical and generalized solutions. Among explicit representations of the solutions, two formulas Hopf-Lax and Hopf play important roles. Many qualitative as well as quantitative properties of solutions are studied via these formulas. The first formula was established in 1957 by Lax \cite{la} for 1-dimensional case and then in 1965, Hopf \cite{h} proved both formulas for multidimensional case. Until now, they turned to be classical. Nevertheless, recently several papers pursuit to exploit the mentioned formulas and get new results, e.g., \cite{au,aa},...
\smallskip

Consider the Cauchy problem for Hamilton-Jacobi
equations of the form
 \begin{equation}\label{1.1}\frac{\partial u}{\partial t} + H(t,x,D_x u)=0\, , \,\, (t,x)\in \Omega=(0,T)\times \R^n,
\end{equation}
\begin{equation}\label{1.2}
u(0,x)=\sigma(x)\, , \,\, x\in \R^n.\end{equation}

If the Hamiltonian is independent of  $(t,x)$ i.e., $H(t,x,p)=H(p)$  and $H(p)$ is convex and superlinear, $\sigma$ is Lipschitz on
$\R^n,$ then the Hopf-Lax formula for solution of the problem (\ref{1.1})-(\ref{1.2})  is defined by
\begin{equation}\label{1.3}u(t,x)=\min_{y\in \R^n} \Big\{ \sigma
(y)+tH^*\big (\frac {x-y}{t}\big)\Big \},\end{equation}
here $H^*$ denotes
the Fenchel conjugate of the convex function $H.$ 

\smallskip
This formula gives the unique viscosity solution to the above Cauchy problem \cite{be,ev, cs, st}.
If $H=H(t,x,p)$ is a convex function with respect to the last variable, then the representation of  viscosity solution of the problem (\ref{1.1})-(\ref{1.2}) (which can be considered as a general form of Hopf-Lax formula)  is established and studied thoroughly; see \cite{ev,cs} and references therein. Especially, in the monograph \cite{cs}, the analysis of regularity properties of the viscosity solution is studied comprehensively where method of calculus of variation is used.
\smallskip

This paper is devoted to investigate some differential properties of formula (\ref{1.3}) for a class of convex Hamiltonians $H=H(p).$ We define and then use the ``generalized characteristics'' for nonsmooth initial data to connect Hopf-Lax formula, the way is somewhat similar to the case of classical solution of the problem since the solution $u(t,x)$ may be of class $C^1$ whereas the initial may not be. Although several differential results are known in the general cases for $H=H(t,x,p)$ under strong assumptions, e.g., $H(t,x,p)$ is of class $C^2$ and the Hessian $D^2 H$ is definite positive;  see \cite{cs}, it is not convenient to apply them to the case $H=H(p)$ under minimum hypotheses. Therefore, in this paper we attempt to exploit tools and technique of Calculus and PDEs to deal with the specific case and not to infer any result from Calculus of Variation. Several properties here can not directly deduced by the general ones.
\smallskip

The structure of the paper is as follows. In section 2, we first collect some facts related to semidifferentials of a function  and
viscosity solution of the Cauchy problem for Hamilton-Jacobi equations defined by Hopf-Lax formula for convex/concave Hamiltonians. We show that the Hopf-Lax formula inherits the semiconvexity of initial data in short time under suitable assumption on $H.$  In section 3, we introduce a kind of generalized characteristics of the Cauchy problem using the semidifferentials of the intial data. Then we present some differential properties of Hopf-Lax formula related to the generalized characteristics.
\smallskip

In section 4, we study the regularity of viscosity solution $u(t,x)$ defined by Hopf-Lax formula and verify some sufficient conditions such that the function $u(t,x)$  is differentiable on the regions of the form $(0,t_*)\times\R^n.$ Finally, we rededuce a result on the backward and forward solution based on Hopf-Lax formula, presented in \cite{bcjs} as an application of this section.

\medskip
 We use the following notations. Let  $T$ be a positive number, $\Omega =(0,T)\times
\R^n;\, |\, .\, |$ and $\langle .,.\rangle $ be the Euclidean norm and
the scalar product in $\R^n$, respectively. For a function $u(t,x)$ defined on $\Omega,$ we denote $ D_xu=(u_{x_1},\dots, u_{x_n})$ and $Du=(u_t, D_xu).$ The Hessian of a function $h(x)$ on $\R^n$ will be denoted by $D^2h(x).$

\section{Preliminaries and Hopf-Lax formulas}
\subsection{Assumptions}

\medskip
 
Consider the following problem $(H,\sigma)$
\begin{equation}\label{2.1}\frac{\partial u}{\partial t} + H(D_x u)=0\, , \,\,
(t,x)\in \Omega =(0,T)\times \R^n,\end{equation}
\begin{equation}\label{2.2}u(0,x)=\sigma(x)\, , \,\, x\in
\R^n.\end{equation}

We suppose that the following conditions hold for $H(p)$
and $\sigma (x)$.

(H0) $H(p)$ is strictly convex on $\R^n$ and superlinear, i.e., 
$\lim\limits_{\|p\|\to\infty}\displaystyle\frac{H(p)}{\|
p\|}=+\infty.$

(H1) $\sigma (x)$ is continuous on $\R^n$ and for every
$(t_0,x_0)\in [0,T)\times \R^n$ there exist positive constants
$r,\ N$ such that

\begin{equation}\label{2.3}\min_{\|y\|\le N} \Big\{ \sigma (y)+tH^*\big (\frac
{x-y}{t}\big)\Big \} \ <\  \sigma (z)+tH^*\big (\frac
{x-z}{t}\big) \end{equation} as $\|z\| >N,$ $|t-t_0|+\|x-x_0\|<r,\
t\ne 0,$  where $H^*$ is the Fenchel conjugate function of $H,$ i.e., 
$$H^*(z)=\max_{y\in \R^n}\{\langle z,y\rangle -H(y)\},\ \forall  z\in \R^n.$$

\smallskip
  Denote
$$\zeta (t,x,y)  =\sigma (y)+tH^*\big (\frac
{x-y}{t}\big),\ (t,x,y)\in (0,T)\times\R^n\times\R^n, $$ and
\begin{equation}\label{l} \ell (t,x)=\{y_0\in \R^n\ |\ \zeta(t,x,y_0)=\min_{\|y\|\le N} \zeta(t,x,y)=\min_{\R^n} \zeta(t,x,y)\},\end{equation}
then $\ell (t,x)$ is a compact set in $\R^n$ by the hypothesis (H1)
and the lower semi-continuity of the function $\zeta (t,x,.).$

\begin{rem}

1. The assumption (H1) can be considered as a compatible condition for the Hamiltonian $H(p)$ and initial data $\sigma
(x).$ 
\smallskip

2.  Assume (H0) holds and $\sigma (x)$ is Lipschitz continuous on $\R^n.$  Then condition (H1) is automatically satisfied. In the sequel, dependent on the context, some time we use assumption (H0), (H1) or (H0) and $\sigma(x)$ is Lipschitz continuous.

\end{rem}

\begin{defn}
Assume (H0), (H1). Then the function $u(t,x)$ defined on $\Omega$ by the formula
\begin{equation}\label{hl1}u(t,x)=\min_{y\in \R^n} \Big\{ \sigma (y)+tH^*\big (\frac
{x-y}{t}\big)\Big \}\end{equation}
is called Hopf-Lax formula of the problem (2.1)-(2.2).
\end{defn}
\begin{rem}
Analogously, we can establish the Hopf-Lax formula for concave Hamiltonian as follows. Let $K:\, \R^n\to\R$ be a concave function. Assume that the convex function $H(p)=-K(p)$ and $-\sigma(x)$ satisfy the condition (H0), (H1). Then the Hopf-Lax formula for the Cauchy problem $(K,\sigma)$ can be represented as
\begin{equation}\label{hl2}u(t,x)=\max_{y\in \R^n} \Big\{ \sigma (y)-t(-K)^*\big (\frac
{y-x}{t}\big)\Big \}.\end{equation}

\end{rem}

\subsection{Sub- and superdifferential of a function.}

Let  $\Om$ be and open set of $ \R^m$ and let $v:\ \Om \to \R$ be a function and $y\in \Om.$ We denote by $D^+v(y)$ (resp. $D^-v(y))$ the set of all $\textbf{p}\in \R^{m}$ defined by:
$$D^+v(y)=\{ \textbf{p}\in \R^{m}\, |\ \lim\sup_{h\to 0} \frac{v(y+h)-v(y)-\langle \textbf{p},h\rangle }{|h|}\ \le \ 0\} $$
(resp. 
$$D^-v(y)=\{\textbf{p}\in \R^{m}\, | \ \lim\inf_{h\to 0} \frac{v(y+h)-v(y)-\langle \textbf{p},h\rangle }{|h|}\ \ge \ 0\}).$$

We call $D^+v(y)$ (resp. $D^-v(y))$ superdifferential (resp. subdifferential) of the function $v(y)$ at $y\in \Om.$ In general, $D^+v(y)$ or $D^-v(y)$ is called  the semidifferential of $v$ at $y\in \Om.$
\smallskip

Note that, if $v$ is a convex function, then the subdifferential $D^-v(x)$ coincides with subgradient $\partial v(x)$ of the function $v.$

The following proposition presents some elementary properties of semidifferentials of  functions that are necessary in the sequel.
\begin {prop}\label{p1}
Let $u, v$ be  functions defined on an open set $\Om \subset \R^m.$

(i)  Suppose that the function $v$ attains a minimum (resp. maximum) at $y_0\in \Om.$ Then $0\in D^-v(y_0)$ (resp. $ 0\in D^+v(y_0)).$
\smallskip

(ii) If the semidifferentials of $u$ and $v$ at $y$ are not empty, then $D^{\pm} u(y) +D^{\pm} v (y) \subset D^{\pm}(u+v)(y).$ 
\smallskip

(iii) Suppose that, at a point $y\in \Om,$  one of two functions $u, \; v$  is differentiable and the semidifferential of the other is not empty. Then we have
$D^{\pm} u(y) +D^{\pm} v(y) = D^{\pm}(u+v)(y).$
\end{prop}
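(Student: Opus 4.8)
The plan is to verify each part directly from the $\limsup$/$\liminf$ definitions of $D^{\pm}$, exploiting the basic inequalities relating $\limsup$ and $\liminf$ of sums.

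For part (i), if $v$ attains a minimum at $y_0$, then $v(y_0+h)-v(y_0)\ge 0$ for all small $h$, so the difference quotient $\frac{v(y_0+h)-v(y_0)-\langle 0,h\rangle}{|h|}=\frac{v(y_0+h)-v(y_0)}{|h|}\ge 0$, whence its $\liminf$ as $h\to 0$ is $\ge 0$; this says exactly $0\in D^-v(y_0)$. The maximum case is symmetric, or follows by applying the minimum case to $-v$ together with the elementary identity $D^+v(y)=-D^-(-v)(y)$.

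For part (ii), take $\mathbf{p}\in D^{+}u(y)$ and $\mathbf{q}\in D^{+}v(y)$. Writing $a(h)=\frac{u(y+h)-u(y)-\langle\mathbf p,h\rangle}{|h|}$ and $b(h)=\frac{v(y+h)-v(y)-\langle\mathbf q,h\rangle}{|h|}$, the difference quotient for $u+v$ at the vector $\mathbf p+\mathbf q$ is exactly $a(h)+b(h)$, and $\limsup_{h\to 0}(a(h)+b(h))\le \limsup a(h)+\limsup b(h)\le 0$, so $\mathbf p+\mathbf q\in D^{+}(u+v)(y)$. The $D^-$ case is identical with $\liminf$ and the reversed inequality $\liminf(a+b)\ge\liminf a+\liminf b$. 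Here the hypothesis that both semidifferentials are nonempty is what makes the statement non-vacuous.

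For part (iii), suppose $u$ is differentiable at $y$, so $D^{+}u(y)=D^{-}u(y)=\{Du(y)\}$ and the quantity $a(h)$ above (with $\mathbf p=Du(y)$) tends to $0$; then for any function $b(h)$ one has $\limsup(a(h)+b(h))=\limsup b(h)$ and $\liminf(a(h)+b(h))=\liminf b(h)$, since adding a sequence tending to $0$ changes neither. The inclusion ``$\subset$'' is part (ii); for ``$\supset$'', given $\mathbf r\in D^{+}(u+v)(y)$, set $b(h)=\frac{v(y+h)-v(y)-\langle \mathbf r-Du(y),h\rangle}{|h|}$, so that $a(h)+b(h)$ is the difference quotient for $u+v$ at $\mathbf r$; since $\limsup(a(h)+b(h))\le 0$ and $a(h)\to 0$, we get $\limsup b(h)\le 0$, i.e. $\mathbf r-Du(y)\in D^{+}v(y)$, hence $\mathbf r\in D^{+}u(y)+D^{+}v(y)$. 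The $D^-$ case is analogous. I expect no serious obstacle here; the only point needing a little care is bookkeeping the algebraic identity that the difference quotient of $u+v$ at $\mathbf p+\mathbf q$ splits as $a(h)+b(h)$, and invoking the correct direction of the $\limsup$/$\liminf$ subadditivity in each case.
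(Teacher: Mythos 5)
Your proposal is correct and follows essentially the same route as the paper: part (i) directly from the sign of the difference quotient, part (ii) via subadditivity of $\limsup$/$\liminf$ applied to the split $a(h)+b(h)$, and part (iii) by subtracting the differential of the smooth summand from an element of $D^{\pm}(u+v)(y)$ and passing to the limit, exactly as in the paper's argument. The only cosmetic difference is that you work out the $D^{+}$ case where the paper writes out the $D^{-}$ case and leaves the other as symmetric.
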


\begin{proof}
(i) Suppose that $v$ attains a minimum at $y_0.$ Then for any small enough $h,$ we have $v(y+h)\ge v(y_0).$ Then
$$\lim\inf_{h\to 0} \frac{v(y+h)-v(y_0)}{|h|}\ \ge \ 0.$$ Thus, $0\in D^-v(y_0).$
\smallskip

(ii) We check for the case of subdifferential since the other is the same. Take $p\in D^-u(y),\ q\in D^-v(y),$  then
\begin{equation} \aligned
\liminf_{h\to 0}&\frac{(u+v)(y+h)-(u+v)(y) -\langle p+q,h\rangle}{|h|}\ge\\
 \liminf_{h\to 0}&\frac{(u)(y+h)-(u)(y) -\langle p,h\rangle}{|h|}
+\liminf_{h\to 0}\frac{(v)(y+h)-(v)(y) -\langle q,h\rangle}{|h|} \ge 0.\endaligned\end{equation}

Thus, $D^{-} u (y)+D^{-} v(y) \subset D^{-}(u+v)(y).$
\smallskip

(iii)  Now assume that $v$ is differentiable at $y\in \Om$ and $D^-u(y)\ne \emptyset, $ then $D^-(u+v)(y)\ne \emptyset.$ Take $s\in D^-(u+v)(y)$ and set $p=s-q$ where $q=Dv(y),$ we have
$$\aligned \frac{u(y+h)-u(y) -\langle p,h\rangle}{|h|}&=\frac{(u+v)(y+h)-(u+v)(y) -\langle p+q,h\rangle}{|h|}\\
& -\frac{v(y+h)-v(y) -\langle q,h\rangle}{|h|}.\endaligned$$
Taking $\liminf$ of both sides as $h\to 0$ and noting that the last expression tends to $0,$  we see that $p\in D^-u(y).$ Thus $s=p+q,$ and we get that $D^-(u+v)(y)=D^-u(y) +\{Dv(y)\}.$
\end{proof}

\begin{defn}
Let $\mathcal O$ be a convex subset of $\R^m$ and let $v:\ \mathcal O \to \R$ be a continuous function.

(a) The function $v$ is called {\it semiconcave} with linear modulus if there is a constant $C>0$ such that
$$\lambda v(y_1)+(1-\lambda ) v(y_2)-v(\lambda y_1+(1-\lambda)y_2) \le \lambda (1-\lambda)\frac C2 |y_1-y_2|^2$$
for any $y_1,\, y_2$ in $\mathcal O$ and for any $\lambda \in [0,1].$ Then the number $C$ is called a {\it semiconcavity constant} of $v.$ The function $v$ is {\it semiconvex} if and only if $-v$ is semiconcave.

(b) The function $v$ is called {\it uniformly convex} with constant $\Lambda>0$ if $v(y)-\Lambda|y|^2,\ y\in \mathcal O$ is a convex function.con
\end{defn}

\begin{rem}
1. The theory of semiconcave functions has been fully studied since
the last decades of previous century. The reader is referred to the monograph \cite{cs} for a comprehensive development of the topic.
\smallskip

2. The notion of semiconcavity (resp. uniform convexity) is a special case of the notion $\sigma$-smoothness (resp. $\rho$-convexity) of a function, see \cite{ap}. The following proposition is extracted from Prop. 2.6 of the just cited article.
\end{rem}

\begin{prop}\label{scc}Let $v:\R^m \to \R$ be a convex function. Moreover, 

(i) Suppose that $v$ is uniformly convex with a constant $C>0.$ Then the Fenchel conjugate function $v^*$ is a semiconcave function with a semiconcavity constant $\frac1C >0.$

(ii) Suppose that $v$ is a semiconcave function with a semiconcavity constant $C^*>0.$ Then $v^*$ is a uniformly convex function with a constant $\frac1{C^*}.$

\end{prop}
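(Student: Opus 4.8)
The plan is to route both parts through the quadratic reformulations of the two hypotheses together with the sup/inf defining the Fenchel transform, staying entirely within elementary convex calculus. First I would record the dictionary: a finite convex $v$ is uniformly convex with constant $C$ exactly when $g:=v-C|\cdot|^{2}$ is convex, and a continuous $v$ is semiconcave with constant $C^{*}$ exactly when $v-\tfrac{C^{*}}{2}|\cdot|^{2}$ is concave — the second equivalence being obtained by substituting the identity $\lambda|y_1|^{2}+(1-\lambda)|y_2|^{2}-|\lambda y_1+(1-\lambda)y_2|^{2}=\lambda(1-\lambda)|y_1-y_2|^{2}$ into the defining inequality of semiconcavity. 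I would also note that uniform convexity forces superlinearity (an affine minorant of $g$ gives $v(y)\ge C|y|^{2}+\langle a,y\rangle+b$), so that in part (i) the supremum defining $v^{*}(z)=\sup_{y}\{\langle z,y\rangle-v(y)\}$ is attained at some $y(z)$ for every $z$ and $v^{*}$ is finite and convex on all of $\R^{m}$, whereas in part (ii) attainment is guaranteed only on $\operatorname{int}(\operatorname{dom}v^{*})$, where $\partial v^{*}\ne\emptyset$.

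For (i): fix $z_1,z_2$, $\lambda\in[0,1]$, set $z_\lambda=\lambda z_1+(1-\lambda)z_2$, and choose maximizers $y_i$ realizing $v^{*}(z_i)$. Using $y_\lambda:=\lambda y_1+(1-\lambda)y_2$ as a competitor in the supremum for $v^{*}(z_\lambda)$, subtracting, inserting the uniform-convexity inequality $v(y_\lambda)\le\lambda v(y_1)+(1-\lambda)v(y_2)-C\lambda(1-\lambda)|y_1-y_2|^{2}$ (which cancels every $v(y_i)$-term), and using the bilinear identity $\lambda\langle z_1,y_1\rangle+(1-\lambda)\langle z_2,y_2\rangle-\langle z_\lambda,y_\lambda\rangle=\lambda(1-\lambda)\langle z_1-z_2,y_1-y_2\rangle$, I land on
\[
\lambda v^{*}(z_1)+(1-\lambda)v^{*}(z_2)-v^{*}(z_\lambda)\ \le\ \lambda(1-\lambda)\big(\langle z_1-z_2,\,y_1-y_2\rangle-C|y_1-y_2|^{2}\big).
\]
A single Cauchy--Schwarz--Young estimate bounds the bracket by $\tfrac1{4C}|z_1-z_2|^{2}$, which is precisely the semiconcavity inequality for $v^{*}$ (the constant coming out as $\tfrac1{2C}$ with the normalization fixed above, i.e. the stated $\tfrac1C$ up to the numerical factor built into the definition of uniform convexity in \cite{ap}). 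As a cross-check, since $v=g+C|\cdot|^{2}$ one has that $v^{*}$ is the infimal convolution of $g^{*}$ with $\tfrac1{4C}|\cdot|^{2}$, and the asserted semiconcavity is the classical $C^{1,1}$-regularity of this Moreau envelope.

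For (ii): now also assume $v$ semiconcave with constant $C^{*}$, so $v-\tfrac{C^{*}}{2}|\cdot|^{2}$ is concave. At any point, $D^{-}v\ne\emptyset$ by convexity and $D^{+}v\ne\emptyset$ by semiconcavity, and a function whose two semidifferentials are both nonempty at a point is differentiable there; hence, differentiating the tangent inequality of the concave function $v-\tfrac{C^{*}}{2}|\cdot|^{2}$, I get the paraboloid bound $v(y)\le v(\bar y)+\langle\nabla v(\bar y),y-\bar y\rangle+\tfrac{C^{*}}{2}|y-\bar y|^{2}$ at every $\bar y$. Now work on $\operatorname{int}(\operatorname{dom}v^{*})$: for $z_1,z_2$ there and $z_\lambda$ as above, pick $\bar y$ maximizing $v^{*}(z_\lambda)$; interior optimality gives $\nabla v(\bar y)=z_\lambda$, and plugging the paraboloid bound into $v^{*}(z_i)\ge\langle z_i,y\rangle-v(y)$ and maximizing over $y$ yields $v^{*}(z_i)\ge\langle z_i,\bar y\rangle-v(\bar y)+\tfrac1{2C^{*}}|z_i-z_\lambda|^{2}$. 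Forming the $\lambda$-combination, the $\bar y$-linear parts reassemble into $v^{*}(z_\lambda)=\langle z_\lambda,\bar y\rangle-v(\bar y)$ while, since $z_1-z_\lambda=(1-\lambda)(z_1-z_2)$ and $z_2-z_\lambda=-\lambda(z_1-z_2)$, the quadratic parts add up to $\tfrac{\lambda(1-\lambda)}{2C^{*}}|z_1-z_2|^{2}$; this gives $v^{*}(z_\lambda)\le\lambda v^{*}(z_1)+(1-\lambda)v^{*}(z_2)-\tfrac{\lambda(1-\lambda)}{2C^{*}}|z_1-z_2|^{2}$ on $\operatorname{int}(\operatorname{dom}v^{*})$, hence on $\operatorname{dom}v^{*}$ by approximation from the interior, which is the asserted uniform convexity of $v^{*}$ (constant $\tfrac1{2C^{*}}$, again $\tfrac1{C^{*}}$ up to normalization). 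The step I expect to be the real obstacle is precisely this attainment business in (ii) — restricting to the interior of the effective domain, invoking nonemptiness of $\partial v^{*}$ there, handling the degenerate case $\operatorname{int}(\operatorname{dom}v^{*})=\emptyset$, and extending the final inequality to the boundary by the continuity of the closed convex function $v^{*}$ along segments; the rest is the two algebraic identities, the ``two-sided semidifferential implies differentiable'' lemma, and one Young inequality.
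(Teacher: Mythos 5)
Your argument is essentially correct, but it is worth knowing that the paper does not prove this proposition at all: it simply extracts it from Proposition 2.6 of Az\'e--Penot \cite{ap}, so your self-contained derivation is a genuinely different (and more informative) route. Your part (i) --- choosing maximizers $y_i$, using $y_\lambda=\lambda y_1+(1-\lambda)y_2$ as a competitor, cancelling the $v(y_i)$ via the uniform-convexity inequality, and closing with the bilinear identity plus Young --- is sound, as is your part (ii) via the paraboloid bound at a maximizer $\bar y$ with $\nabla v(\bar y)=z_\lambda$; the ``convex plus semiconcave implies differentiable'' step and the restriction to $\mathrm{int}(\mathrm{dom}\,v^*)$ are handled correctly. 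The one point you should make explicit rather than wave at is the constants. With the paper's own (mutually inconsistent) normalizations --- uniform convexity means $v-\Lambda|y|^2$ convex, while semiconcavity carries the factor $\frac{C}{2}$ --- your computation gives semiconcavity constant $\frac{1}{2C}$ in (i), which implies the stated $\frac1C$ since a larger semiconcavity constant is a weaker assertion; but in (ii) you obtain uniform convexity constant $\frac{1}{2C^*}$, and the stated $\frac{1}{C^*}$ is \emph{stronger} and in fact false as literally written: for $v(y)=\frac{C^*}{2}|y|^2$ one has $v^*(z)=\frac{1}{2C^*}|z|^2$, and $v^*(z)-\frac{1}{C^*}|z|^2$ is concave, not convex. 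So your $\frac{1}{2C^*}$ is the correct constant for the paper's definitions; the discrepancy is an artifact of the paper's normalization (note its own proof of Theorem 2.11 silently switches to ``$H^*(p)-\frac{\theta}{2}|p|^2$ convex'' for ``uniformly convex with constant $\theta$''), not a gap in your argument. What your approach buys is a transparent, elementary proof that exposes exactly this normalization issue and the duality $v=g+C|\cdot|^2\Leftrightarrow v^*=g^*\,\square\,\frac{1}{4C}|\cdot|^2$; what the citation buys the paper is brevity and the general $\sigma$-smooth/$\rho$-convex framework of \cite{ap}.
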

\begin{defn}

A continuous functions $u: [0, T]\times \R^n\to \R$ is called a {\it viscosity solution} of the Cauchy problem (1.1)-(1.2) on $\Omega=(0,T)\times \R^n,$ provided that the following hold:\\[1ex]
{\rm (i)} $ \lim_{(t,x)\to (0,x)}u(t,x)=\sigma(x)$ for all $x\in \R^n$;\\
{\rm (ii)} For each $(t_0,x_0)\in  \Omega$ and $(p,q)\in D^+u(t_0,x_0),$ then
\begin{equation} \label{sub}p +H(t_0,x_0,q) \le 0,\end{equation}
and for each $(t_0,x_0)\in  \Omega$ and $(p,q)\in D^-u(t_0,x_0),$ then
\begin{equation} \label{sup}p +H(t_0,x_0,q) \ge 0.\end{equation}

If the continuous function $u$ satisfies (i) and (ii)-(\ref{sub}) (resp. (ii)-(\ref{sup})), then it is called a viscosity subsolution (resp. supersolution) of the problem (1.1)-(1.2). It is noted that, there are several propositions which are equivalent to this definition, e.g., the notion of $C^1$-test function is used instead of semidifferentials, see \cite{cl}.
\end{defn}

\medskip

We collect here some important properties of Hopf-Lax formula, given by the following theorem. The proof is as the one of cited papers with minor adjustment.

\begin{thm} (see \cite {ev,cs,nh1}). Assume (H0), (H1). Then

\smallskip
(a)  The function $u(t,x)$ defined by Hopf-Lax formula
$$u(t,x)=\min_{y\in \R^n} \Big\{ \sigma (y)+tH^*\big (\frac
{x-y}{t}\big)\Big \}$$
 is a viscosity solution of
the problem (\ref{2.1})-(\ref{2.2}). Moreover, it is locally Lipschitz continuous on $\Omega.$

(b) Suppose that $\sigma(x)$ is semiconcave on $\R^n$ or $H$ is uniformly convex. Then for all $t\in (0,T],$ the solution $u(t,x)$ is semiconcave in $x$ on $\R^n.$

(c) The solution $u(t,x)$ is continuously differentiable in
some open $V\subset \Omega$ if and only if $ \ell(t,x)$  is a
singleton for all $(t,x)\in V.$

\end{thm}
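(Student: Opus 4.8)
For (a)--(c) I would follow the lines of \cite{ev,cs,nh1}, adapting them to the hypotheses (H0)--(H1). Throughout write $L=H^{*}$ and $\zeta(t,x,y)=\sigma(y)+tL\big(\tfrac{x-y}{t}\big)$. For (a) the starting point is that, by (H0), $L$ is finite on $\R^{n}$, superlinear and of class $C^{1}$ (differentiability of $L$ is equivalent to strict convexity of $H$, and a finite everywhere-differentiable convex function is $C^{1}$). The first substantive step is the dynamic-programming identity
$$u(t,x)=\min_{z\in\R^{n}}\Big\{u(s,z)+(t-s)L\Big(\tfrac{x-z}{t-s}\Big)\Big\},\qquad 0<s<t,$$
obtained by a two-sided estimate: the ``$\le$'' part via the substitution $z=y+\tfrac{t-s}{t}(x-y)$, the ``$\ge$'' part via convexity of $L$ and Jensen. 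Local Lipschitz continuity of $u$ on $\Omega$ then follows from this identity and (H1): (H1) makes $\ell(t,x)$ nonempty, compact and contained in a fixed ball as $(t,x)$ varies in a neighbourhood, $L$ is locally Lipschitz and $\sigma$ continuous, so the usual comparisons give Lipschitz bounds in $x$ and in $t$ on compact subsets of $\Omega$. The initial condition is checked by comparing $u(t,x)$ with the admissible choice $y=x$ (upper bound) and using superlinearity of $L$ to force every minimizer $y$ to approach $x$ as $t\to0^{+}$ (lower bound). Finally the viscosity inequalities are verified with $C^{1}$ test functions: if $u-\phi$ has a local maximum at $(t_{0},x_{0})$, the ``$\le$'' half of the semigroup identity gives $\phi_{t}(t_{0},x_{0})+\langle D_{x}\phi(t_{0},x_{0}),w\rangle\le L(w)$ for every $w$, hence $\phi_{t}+H(D_{x}\phi)\le0$ since $\sup_{w}(\langle q,w\rangle-L(w))=H^{**}(q)=H(q)$; if $u-\phi$ has a local minimum at $(t_{0},x_{0})$ and $y_{0}\in\ell(t_{0},x_{0})$, the same computation along the segment $z_{s}=y_{0}+\tfrac{s}{t_{0}}(x_{0}-y_{0})$ — along which one shows the minimizer is preserved, so $u(t_{0},x_{0})-u(s,z_{s})=(t_{0}-s)L(v)$ with $v=\tfrac{x_{0}-y_{0}}{t_{0}}$ — gives $\phi_{t}(t_{0},x_{0})+\langle D_{x}\phi(t_{0},x_{0}),v\rangle\ge L(v)$, hence $\phi_{t}+H(D_{x}\phi)\ge0$.

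For (b) the efficient device is a change of variables. If $\sigma$ is semiconcave with constant $C$, put $z=x-y$, so $u(t,x)=\min_{z}\{\sigma(x-z)+tL(z/t)\}$; for each fixed $z$ the map $x\mapsto\sigma(x-z)+tL(z/t)$ is semiconcave with the same constant $C$ (an affine reparametrization of $\sigma$ plus a constant), and since ``semiconcave with constant $C$'' means exactly ``$v-\tfrac{C}{2}|\cdot|^{2}$ concave'' while an infimum of concave functions is concave, $u(t,\cdot)$ is semiconcave with constant $C$ for every $t\in(0,T]$. If instead $H$ is uniformly convex with constant $\Lambda$, Proposition \ref{scc}(i) says $L$ is semiconcave with constant $1/\Lambda$; a direct scaling computation then shows $x\mapsto tL\big(\tfrac{x-y}{t}\big)$ is semiconcave with constant $\tfrac{1}{\Lambda t}$, uniformly in $y$, and the same infimum-of-concave argument gives that $u(t,\cdot)$ is semiconcave with constant $\tfrac{1}{\Lambda t}$ for each $t\in(0,T]$. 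In both cases $u(t,\cdot)$ is finite and continuous by (a), so these infima are genuine finite concave functions.

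For (c) I would use the elementary fact (cf.\ \cite{cs,cl}) that a continuous function is differentiable at a point exactly when both its sub- and its superdifferential there are nonempty, in which case both equal the gradient. Assume $\ell(t,x)$ is a singleton $\{y(t,x)\}$ for every $(t,x)\in V$ and fix $(t_{0},x_{0})\in V$, $y_{0}=y(t_{0},x_{0})$. Since $u\le\zeta(\cdot,y_{0})$ on $\Omega$ with equality at $(t_{0},x_{0})$ and $\zeta(\cdot,y_{0})$ is $C^{1}$ in $(t,x)$ for $t>0$, Proposition \ref{p1} gives $\nabla_{(t,x)}\zeta(t_{0},x_{0},y_{0})\in D^{+}u(t_{0},x_{0})$. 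For the subdifferential, take $(t,x)$ near $(t_{0},x_{0})$ and any $y\in\ell(t,x)$; by (H1) the map $\ell$ is upper semicontinuous with compact values, so, $\ell(t_{0},x_{0})$ being a singleton, $y\to y_{0}$ as $(t,x)\to(t_{0},x_{0})$. Since $\zeta(t_{0},x_{0},y)\ge u(t_{0},x_{0})$,
$$u(t,x)=\zeta(t,x,y)\ge u(t_{0},x_{0})+\big[\zeta(t,x,y)-\zeta(t_{0},x_{0},y)\big],$$
and the mean value theorem for the $C^{1}$ map $(t,x)\mapsto\zeta(t,x,y)$, together with $y\to y_{0}$ and continuity of $\nabla_{(t,x)}\zeta$, yields $u(t,x)\ge u(t_{0},x_{0})+\langle\nabla_{(t,x)}\zeta(t_{0},x_{0},y_{0}),(t-t_{0},x-x_{0})\rangle+o(|(t-t_{0},x-x_{0})|)$, so $\nabla_{(t,x)}\zeta(t_{0},x_{0},y_{0})\in D^{-}u(t_{0},x_{0})$. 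Hence $u$ is differentiable throughout $V$ with $Du(t,x)=\nabla_{(t,x)}\zeta(t,x,y(t,x))$, which is continuous because $y(\cdot)$ is continuous (a singleton-valued upper semicontinuous map) and $\nabla_{(t,x)}\zeta$ is continuous; thus $u\in C^{1}(V)$. Conversely, if $u\in C^{1}(V)$ and $y_{1},y_{2}\in\ell(t_{0},x_{0})$ with $(t_{0},x_{0})\in V$, then both $\nabla_{(t,x)}\zeta(t_{0},x_{0},y_{i})$ lie in $D^{+}u(t_{0},x_{0})=\{Du(t_{0},x_{0})\}$; comparing spatial components gives $\nabla L\big(\tfrac{x_{0}-y_{1}}{t_{0}}\big)=\nabla L\big(\tfrac{x_{0}-y_{2}}{t_{0}}\big)$, and injectivity of $\nabla L$ forces $y_{1}=y_{2}$.

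The routine parts are the semigroup identity with the Lipschitz and initial-data estimates of (a) and the semiconcavity bookkeeping of (b), which go exactly as in \cite{ev,cs,nh1}. The point needing genuine care is the ``only if'' implication in (c): it rests on injectivity of $\nabla L$ — equivalently on strict convexity of $L$, equivalently on differentiability of $H$ — which is automatic in the smooth setting of \cite{cs} but, under (H0) alone, should be recorded as an extra hypothesis (otherwise distinct minimizers can carry the same slope $\nabla L$ and $u$ may still be $C^{1}$). A second, smaller point in (c) is that the upper semicontinuity of the minimizer map $\ell$ must be extracted from (H1) uniformly on a neighbourhood, not merely pointwise.
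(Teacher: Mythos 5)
Your reconstruction is correct and follows exactly the route the paper itself relies on: the paper supplies no argument for this theorem, deferring to \cite{ev,cs,nh1}, and your steps (the dynamic-programming identity for (a), the infimum-of-uniformly-semiconcave-functions device for (b), and one-sided differentials plus upper semicontinuity of $\ell$ for (c)) are the standard ones in those references and are consistent with the tools the paper develops later (Proposition \ref{p1}, Theorem \ref{di}, Lemma \ref{ev}). Your caveat about the ``only if'' half of (c) is not merely a stylistic remark but identifies a genuine gap in the statement under (H0)--(H1) alone: strict convexity of $H$ makes $H^{*}$ differentiable and $C^{1}$, but not strictly convex, so $\nabla H^{*}$ need not be injective. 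A concrete failure for $n=1$: take $H(p)=p^{2}+|p|$ and $\sigma\equiv 0$, which satisfy (H0)--(H1); then $H^{*}(z)=\tfrac14\big((|z|-1)^{+}\big)^{2}$, so $u\equiv 0$ is of class $C^{\infty}$ while $\ell(t,x)=\{y:\ |x-y|\le t\}$ is never a singleton. Hence the ``only if'' direction of (c) requires in addition that $H\in C^{1}(\R^{n})$ (equivalently, that $\nabla H^{*}$ be injective), an assumption the paper does impose explicitly from Section 3 onward but not in this theorem; the ``if'' direction and parts (a), (b) stand as you wrote them.
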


\begin{rem} From (c), we note that, if  $u(t,x)$ is differentiable at $(t_0,x_0)$ then $\ell (t_0,x_0)=\{p\}$  is a singleton; see \cite{nh1}. On the other hand, the sufficient condition in b) will be improved in the theorem below.
\end{rem}

 \begin{thm}\label{di} Assume (H0), (H1). Let $(t_0,x_0)\in \Omega$ such that $\ell(t_0,x_0)$ is a singleton. Then Hopf-Lax formula $u(t,x)$ defined by (\ref{hl1}) is differentiable at $(t_0, x_0)$.
 \end{thm}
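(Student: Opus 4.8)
The plan is to exhibit a single vector lying in both $D^{+}u(t_0,x_0)$ and $D^{-}u(t_0,x_0)$; once this is done the difference quotient of $u$ at $(t_0,x_0)$ is forced to have limit $0$, so $u$ is differentiable there. Write $y_0$ for the unique point of $\ell(t_0,x_0)$. First I would record the regularity of the ingredients: since $H$ is strictly convex and superlinear, $H^{*}$ is finite and of class $C^{1}$ on all of $\R^{n}$, so for each fixed $y$ the function $(t,x)\mapsto\zeta(t,x,y)=\sigma(y)+tH^{*}\big(\frac{x-y}{t}\big)$ is $C^{1}$ on $\{t>0\}\times\R^{n}$, with $D_{x}\zeta=\nabla H^{*}\big(\frac{x-y}{t}\big)$ and $\zeta_{t}=H^{*}\big(\frac{x-y}{t}\big)-\big\langle\nabla H^{*}\big(\frac{x-y}{t}\big),\frac{x-y}{t}\big\rangle$; in particular $D_{(t,x)}\zeta$ is jointly continuous in $(t,x,y)$ on $\{t>0\}$. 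Set $(p^{*},q^{*}):=D_{(t,x)}\zeta(t_0,x_0,y_0)$.

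The inclusion $(p^{*},q^{*})\in D^{+}u(t_0,x_0)$ is the easy half. Since $u(t,x)\le\zeta(t,x,y_0)$ for all $(t,x)$ near $(t_0,x_0)$, with equality at $(t_0,x_0)$, and $\zeta(\cdot,\cdot,y_0)$ is differentiable there, we have $u(t,x)-u(t_0,x_0)-\langle(p^{*},q^{*}),(t-t_0,x-x_0)\rangle\le\zeta(t,x,y_0)-\zeta(t_0,x_0,y_0)-\langle(p^{*},q^{*}),(t-t_0,x-x_0)\rangle=o(|(t-t_0,x-x_0)|)$, which is exactly $(p^{*},q^{*})\in D^{+}u(t_0,x_0)$.

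The core of the proof is $(p^{*},q^{*})\in D^{-}u(t_0,x_0)$, and this is where the singleton hypothesis enters. For $(s,w)$ small with $t_0+s>0$ choose $y_{s,w}\in\ell(t_0+s,x_0+w)$. Two ingredients are needed. First, by (H1) together with the continuity of $\zeta$ and of $u$, the multivalued map $(t,x)\mapsto\ell(t,x)$ is locally bounded with closed graph, so since $\ell(t_0,x_0)=\{y_0\}$ we get $y_{s,w}\to y_0$ as $(s,w)\to0$. Second, $D_{(t,x)}\zeta$ is uniformly continuous on a compact neighbourhood of $(t_0,x_0,y_0)$, hence there is a modulus $\omega$ with $\zeta(t_0+s,x_0+w,y)-\zeta(t_0,x_0,y)\ge\langle D_{(t,x)}\zeta(t_0,x_0,y),(s,w)\rangle-\omega(|(s,w)|)\,|(s,w)|$ uniformly for $y$ near $y_0$. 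Combining these with $\zeta(t_0,x_0,y_{s,w})\ge u(t_0,x_0)$ (because $y_0$ minimizes $\zeta(t_0,x_0,\cdot)$) gives $u(t_0+s,x_0+w)=\zeta(t_0+s,x_0+w,y_{s,w})\ge u(t_0,x_0)+\langle D_{(t,x)}\zeta(t_0,x_0,y_{s,w}),(s,w)\rangle-\omega(|(s,w)|)\,|(s,w)|$. Since $y_{s,w}\to y_0$, continuity of $D_{(t,x)}\zeta$ yields $D_{(t,x)}\zeta(t_0,x_0,y_{s,w})\to(p^{*},q^{*})$, and therefore $\liminf_{(s,w)\to0}\frac{u(t_0+s,x_0+w)-u(t_0,x_0)-\langle(p^{*},q^{*}),(s,w)\rangle}{|(s,w)|}\ge0$, i.e.\ $(p^{*},q^{*})\in D^{-}u(t_0,x_0)$.

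Having $(p^{*},q^{*})\in D^{+}u(t_0,x_0)\cap D^{-}u(t_0,x_0)$, the $\limsup$ and $\liminf$ of the associated difference quotient are both $0$, so $u$ is differentiable at $(t_0,x_0)$ with $Du(t_0,x_0)=(p^{*},q^{*})$. The step I expect to be the genuine obstacle is the first ingredient of the previous paragraph — the upper semicontinuity of $\ell$ together with the local uniform bound furnished by (H1), i.e.\ the fact that minimizers of the nearby problems cannot escape from a neighbourhood of $y_0$; the remaining steps are the routine touching-function argument and a uniform first-order Taylor estimate.
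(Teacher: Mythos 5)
Your proposal is correct and follows essentially the same route as the paper: the upper estimate comes from touching $u$ from above by the $C^1$ function $\zeta(\cdot,\cdot,y_0)$, and the lower estimate comes from picking minimizers $y_{s,w}\in\ell(t_0+s,x_0+w)$, forcing them to converge to $y_0$ via the (H1)-bound and the closed graph of $\ell$, and then using a first-order expansion of $\zeta$ uniform in $y$ near $y_0$ (the paper phrases this via the mean value theorem rather than a modulus of continuity, and writes the two halves as $\limsup\le 0$ and $\liminf\ge 0$ rather than as membership in $D^{+}u\cap D^{-}u$, but these are the same argument). You also correctly identified the convergence of the nearby minimizers as the crux of the proof.
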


\begin{proof}
Suppose that $\ell (t_0,x_0)=\{y_0\}.$ We denote
$$p_t =H^*(\frac {x_0-y_0}{t_0})-\frac 1{t_0}\langle x_0-y_0,H^*_z(\frac{x_0-y_0}{t_0})\rangle;\ p=H^*_z(\frac{x_0-y_0}{t_0})$$
and let
$$\alpha =\liminf_{(h,k)\to (0,0)}\frac{u(t_0+h,x_0+k)-u(t_0,x_0) -p_t h -\langle p,k\rangle}{\sqrt{h^2+|k|^2}},$$
for  $(h,k)\in \R\times \R^n.$ 

Then there exists a sequence $(h_m,k_m)\to (0,0)$ such that $\lim_{m\to \infty} \Phi_m =\alpha,$ where
$$ \Phi_m = \frac{u(t_0+h_m,x_0+k_m)-u(t_0,x_0) -p_t h_m -\langle p,k_m\rangle}{\sqrt{h_m^2+|k_m|^2}}.$$

For each $m\in \N,$ we take $y_m\in \ell(t_0+h_m,x_0+k_m)$ then
\begin{equation}\label{ps}\aligned \Phi_m\ \ge  &\ \frac{\varphi (t_0+h_m,x_0+k_m,y_m)-\varphi (t_0,x_0,y_m)-p_th_m -\langle p,k_m\rangle}{\sqrt{h_m^2+|k_m|^2}}\\
\ge  &\  \frac{(t_0+h_m)H^*(\frac{x_0+k_m-y_m}{t_0+h_m}) -t_0H^*(\frac {x_0-y_m}{t_0})-p_t h_m -\langle p,k_m\rangle}{\sqrt{h_m^2+|k_m|^2}},\endaligned\end{equation}
Applying the mean value theorem for the functions $(t,x)\mapsto tH^*(\frac{x-y_m}{t})$ on the line segment $[(t_0,x_0-y_m), (t_0+h_m, x_0+k_m-y_m)]$ we can see that the numerator of right hand side of (\ref{ps}) is equal to the following expression:
 $$\aligned
 \Psi_m &=h_m\Big (H^*(\frac{x_0+k_m^* -y_m}{t_0+h_m^*})-\frac 1{t_0+h_m^*}\langle H^*_z(\frac{x_0+k_m^*
 -y_m}{t_0+h_m^*}),x_0+k_m^*-y_m\rangle-p_t\Big ) \\
&+k_m\Big (H^*_z(\frac{x_0+k_m^*-y_m}{t_0+h_m^*})-p\Big),\endaligned$$
where $|h_m^*|\le |h_m|,\ |k_m^*|\le |k_m|.$
\smallskip

Taking into account the assumption (H1), it is easy to see that, for
$(h_m, k_m)$ small enough, the sequence $(y_m)_m$ is bounded, then
we can choose a subsequence also denoted by $(y_m)_m$ such that
$y_m\to y^*$ as $m\to \infty.$ Since the set-valued mapping
$(t,x)\mapsto \ell(t,x)$ is upper semicontinuous \cite{vhs}, then
$y^*\in \ell(t_0,x_0),$ that is $y_0=y^*.$

Now, letting $m\to \infty$ we have
$$\alpha=\lim_{m\to\infty}\, \Phi_m \ge \lim_{m\to\infty} \frac{\Psi_m}{\sqrt{h_m^2+|k_m|^2}} =0.$$

On the other hand, let
$$\beta =\limsup_{(h,k)\to (0,0)}\frac{u(t_0+h,x_0+k)-u(t_0,x_0) -p_t h -\langle p,k\rangle}{\sqrt{h^2+|k|^2}}. $$

We have, for $y_0\in \ell (t_0,x_0), $ then
$$u(t_0,x_0) =\sigma(y_0) +t_0H^*(\frac{x_0-y_0}{t_0}) , \ \text{and}$$  
$$u(t_0+h, x_0+k)\le \sigma(y_0) +(t_0+h)H^*(\frac{x_0+k-y_0}{t_0+h}).$$
 Thus, applying the mean value theorem as above, we have
$$\aligned &u(t_0+h,x_0+k)-u(t_0,x_0) \le (t_0+h)H^*(\frac{x_0+k-y_0}{t_0+h}) -t_0H^*(\frac{x_0-y_0}{t_0}) \\
&\le h\Big (H^*_z(\frac{x_0+k^*-y_0}{t_0+h^*}) -\frac{1}{t_0+h}\langle x_0+k^*-y_0,H^*_z(\frac{x_0+k^*-y_0}{t_0+h^*}) 
-p_t\Big) \\ &+k\Big (H^*_z(\frac{x_0+k^*-y_0}{t_0+h^*})-p\Big )=\Psi(h,k),\endaligned$$
where $|h^*|\le |h|, \, |k^*|\le |k|.$  Therefore
$$\beta \le \limsup_{(h,k)\to (0,0)}\frac{\Psi(h,k)}{\sqrt{h^2+|k|^2}} =0.$$

Thus,
$$\lim_{(h,k)\to (0,0)}\frac{u(t_0+h,x_0+k)-u(t_0,x_0) -p_t h -\langle p,k\rangle}{\sqrt{h^2+|k|^2}} =0.$$

The theorem is then proved.
\end{proof}

\begin{rem}
We suppose that for fixed $t_0\in (0,T],$ the function $u(t,\cdot)$ is differentiable at $x_0\in \R^n.$ Then $\ell(t_0,x_0)$ is a singleton, see \cite{vhs}. By above theorem, we deduce that $u(t,x)$ is also differentiable at $(t_0,x_0)$ as a function of two variables.
\end{rem}

\begin{thm}\label{sc} Assume  (H0), (H1). In addition, let $H=H(p)$ be a semiconcave function with the semiconcavity constant $\theta^{-1} >0$  and let $\sigma$ be a semiconvex function with constant $B>0.$ Then there exists $t_*\in (0, T)$ such that for all $t_0\in (0, t_*),$ the function $v(x)=u(t_0,x)$ is semiconvex, where $u(t,x)$ is the Hopf-Lax formula  defined by (\ref{hl1}).
 \end{thm}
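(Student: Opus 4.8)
The plan is to write $v(x)=u(t_0,x)$ as the partial minimum in $y$ of the two-variable function $\zeta(t_0,x,y)=\sigma(y)+t_0H^*\big(\tfrac{x-y}{t_0}\big)$ and to show that, after adding a suitable quadratic $\tfrac c2|x|^2$ in the $x$-variable, this function becomes \emph{jointly} convex in $(x,y)$ as soon as $t_0$ is small. Since the partial minimum of a jointly convex function is convex, this gives that $v(x)+\tfrac c2|x|^2$ is convex, i.e.\ that $v$ is semiconvex with constant $c$.

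First I would expose the convexity hidden in the data. Since $\sigma$ is semiconvex with constant $B$, the function $\tilde\sigma(y):=\sigma(y)+\tfrac B2|y|^2$ is convex on $\R^n$. Since $H$ is semiconcave with constant $\theta^{-1}$, Proposition \ref{scc}(ii) gives that $H^*$ is uniformly convex, so $\phi(z):=H^*(z)-\tfrac\theta2|z|^2$ is convex on $\R^n$; both $\tilde\sigma$ and $\phi$ are finite-valued, the former because $\sigma$ is continuous, the latter because $H$ is superlinear and hence $H^*$ is finite everywhere. Using $H^*(z)=\phi(z)+\tfrac\theta2|z|^2$ with $z=\tfrac{x-y}{t_0}$ in the Hopf-Lax formula, I would write, for a fixed $t_0\in(0,T)$,
\[
\zeta(t_0,x,y)=\sigma(y)+t_0H^*\!\Big(\frac{x-y}{t_0}\Big)
=\Big[\tilde\sigma(y)+t_0\,\phi\!\Big(\frac{x-y}{t_0}\Big)\Big]
+\Big[\frac{\theta}{2t_0}|x-y|^2-\frac B2|y|^2\Big],
\]
and recall $v(x)=\min_{y\in\R^n}\zeta(t_0,x,y)$, the minimum being attained on the compact set $\ell(t_0,x)$ by (H1), so $v$ is finite-valued.

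Next, fix $c>0$ and look at $\zeta(t_0,x,y)+\tfrac c2|x|^2$. The first bracket above is jointly convex in $(x,y)$: $\tilde\sigma(y)$ is convex, while $(x,y)\mapsto t_0\,\phi\big(\tfrac{x-y}{t_0}\big)$ is a convex function composed with a linear map and scaled by $t_0>0$. Hence joint convexity of $\zeta(t_0,\cdot,\cdot)+\tfrac c2|x|^2$ reduces to nonnegativity of the quadratic form
\[
Q(x,y)=\frac{\theta}{2t_0}|x-y|^2-\frac B2|y|^2+\frac c2|x|^2 .
\]
A short computation — e.g.\ minimizing $Q(\cdot,y)$ over $x$ for each fixed $y$ and checking that what remains is a nonnegative multiple of $|y|^2$ — shows that $Q\ge 0$ on $\R^n\times\R^n$ exactly when $t_0\le\theta/B$ and $c\ge\dfrac{\theta B}{\theta-Bt_0}$. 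So I would take $t_*$ to be any number in $(0,T)$ with $t_*\le\theta/B$ (for instance $t_*=\min\{\theta/B,\tfrac T2\}$), and then, given $t_0\in(0,t_*)$, set $c:=\dfrac{\theta B}{\theta-Bt_0}>0$.

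With this choice $(x,y)\mapsto\zeta(t_0,x,y)+\tfrac c2|x|^2$ is jointly convex, so its partial minimum over $y$, namely $v(x)+\tfrac c2|x|^2=\min_{y}\big(\zeta(t_0,x,y)+\tfrac c2|x|^2\big)$, is convex and finite: given $x_0,x_1$ and $\lambda\in[0,1]$, one chooses $y_i\in\ell(t_0,x_i)$ and tests joint convexity along the segment joining $(x_0,y_0)$ to $(x_1,y_1)$. This says exactly that $v(x)=u(t_0,x)$ is semiconvex with constant $c$, and $c\to B$ as $t_0\to 0^+$, consistently with $u(0,\cdot)=\sigma$. The step I expect to require the most care is the analysis of the indefinite correction $\tfrac{\theta}{2t_0}|x-y|^2-\tfrac B2|y|^2$ — it is indefinite precisely because $H$ is only semiconcave and $\sigma$ only semiconvex (not convex) — and the point is that the uniform convexity of $H^*$ furnishes just enough of the term $|x-y|^2$ to absorb the harmful $-|y|^2$ once $t_0<\theta/B$; this is what forces the bound $t_*\le\theta/B$ and makes the conclusion local in time.
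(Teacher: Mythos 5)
Your proof is correct, but it takes a genuinely different route from the paper's. The paper follows the scheme of Theorem 3.5.3 (iv) of \cite{cs}: it fixes $x_1,x_2$, picks minimizers $y_i\in\ell(t_0,x_i)$, bounds the midpoint defect $u(t_0,x_1)+u(t_0,x_2)-2u(t_0,\frac{x_1+x_2}{2})$ from below by $-B|y_1-y_2|^2+\frac{\theta}{4t_0}|(x_1-x_2)-(y_1-y_2)|^2$ using the same consequence of Proposition \ref{scc} that you invoke (uniform convexity of $H^*$), and then absorbs the cross term with Young's inequality $2\langle a,b\rangle\le \epsilon^{-1}|a|^2+\epsilon|b|^2$ under the same smallness condition $t_0\lesssim\theta/B$. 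You instead package the two quadratic corrections into the single form $Q(x,y)=\frac{\theta}{2t_0}|x-y|^2-\frac{B}{2}|y|^2+\frac{c}{2}|x|^2$, observe that $\zeta(t_0,\cdot,\cdot)+\frac c2|x|^2$ is jointly convex once $Q\ge 0$, and conclude by the stability of convexity under partial minimization. The mechanisms are equivalent at bottom (your positivity condition on $Q$ is the paper's Young-inequality absorption in disguise), but your version buys two things: it yields the full semiconvexity inequality for all $\lambda\in[0,1]$ directly, rather than only the midpoint version, and it produces the cleaner constant $c=\theta B/(\theta-Bt_0)$, which stays bounded and tends to $B$ as $t_0\to0^+$, whereas the constant in the paper's final estimate blows up like $1/t_0$; the paper's argument, in exchange, is more self-contained, needing only the single inequality (\ref{sc1}) evaluated at the minimizers. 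Two small points to tidy in your write-up: the phrase ``exactly when $t_0\le\theta/B$'' should read $t_0<\theta/B$ (at equality no finite $c$ works, though this is harmless since you take $t_0<t_*\le\theta/B$), and you should record explicitly that convexity of $v(x)+\frac c2|x|^2$ is equivalent to semiconvexity with constant $c$ in the sense of the paper's definition, via the identity $\lambda|y_1|^2+(1-\lambda)|y_2|^2-|\lambda y_1+(1-\lambda)y_2|^2=\lambda(1-\lambda)|y_1-y_2|^2$.
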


\begin{proof}By assumption and Prop. \ref{scc}, we first note that the Legendre conjugate function $H^*$ is a uniformly convex function with constant $\theta>0.$ Therefore the function $\phi (p)=H^*(p) -\frac{\theta}2 |p|^2$ is convex and then, for all $a,b\in \R^n$  we have 
\begin{equation}\label{sc1} H^*(a) +H^*(b) -2H^*(\frac{a+b}{2}) \ge \frac{\theta}{2} (|a|^2+|b|^2-2|\frac{a+b}{2}|^2)=\frac{\theta}4|a-b|^2.
\end{equation}
Next, we follow the argument in the proof of Theorem 3.5.3 (iv) \cite{cs} with an appropriate adjustment. Take $\gamma >0$ such that $\Lambda =\theta \gamma>B$ and then choose $t_*\in (0,T)$ such that $0<\gamma t_*\le 1.$ Let $ t_0\in (0,t_*), x_1, x_2\in \R^n,$ pick out $y_1\in \ell(t_0,x_1), y_2\in \ell(t_0,x_2);$ using the inequality (\ref{sc1}) we have
$$\aligned & u(t_0,x_1)+u(t_0,x_2)-2u(t_0,\frac{x_1+x_2}2)\ge \sigma(y_1)+t_0H^*(\frac{x_1-y_1}{t_0}) +\sigma(y_2)\\
&+t_0H^*(\frac{x_2-y_2}{t_0})-2\sigma(\frac{y_1+y_2}2)-2t_0H^*(\frac{x_1-y_1+x_2-y_2}{2t_0})\ge \sigma(y_1)\\
&+\sigma(y_2)-2\sigma(\frac{y_1+y_2}2) +t_0\big (H^*(\frac{x_1-y_1}{t_0}) +H^*(\frac{x_2-y_2}{t_0})-2H^*(\frac{x_1-y_1+x_2-y_2}{2t_0})\big)\\
&\ge -B|y_1-y_2|^2 +\frac{\theta}{4t_0}|(x_1-x_2)-(y_1-y_2)|^2\\
&\ge -B|y_1-y_2|^2 +\frac 1{\gamma t_0}\; \frac{\Lambda}4\big(|x_1-x_2|^2 +|y_1-y_2|^2-2\langle x_2-x_1, y_2-y_1\rangle\big).\endaligned $$

Using the obvious inequality $2\langle a,b\rangle \le \frac{|a|^2}{\epsilon} +\epsilon|b|^2$ for $\epsilon >0,$ we see that
$$\aligned &\frac{\Lambda}4\big(|x_1-x_2|^2 +|y_1-y_2|^2-2\langle x_2-x_1, y_2-y_1\rangle\big)\\
&\ge \frac{\Lambda}4\big(|x_1-x_2|^2 +|y_1-y_2|^2- \frac{\Lambda}{\Lambda -B}| x_1-x_2|^2-\frac{\Lambda -B}{\Lambda}|y_1-y_2|^2\big)\\
&=-\frac{\Lambda B}{\Lambda-B}\big|\frac{x_1-x_2}2\big|^2 +B\big|\frac{y_1-y_2}2\big|^2.\endaligned$$

Therefore,
$$\aligned & u(t_0,x_1) +u(t_0,x_2)-2u(t_0,\frac{x_1+x_2}2)\\
&\ge -\frac{\Lambda B}{\gamma t_0(\Lambda -B)}\big|\frac{x_1-x_2}2\big|^2 +(\frac B{\gamma t_0}-B)|y_1-y_2|^2\\
&\ge -\frac{\Lambda B}{\gamma t_0(\Lambda -B)}\big|\frac{x_1-x_2}2\big|^2.\endaligned$$

Thus, the function $v(x)=u(t_0,x)$ is a semiconvex function.
\end{proof}

\section{Generalized characteristics}

In this section we focus on the study of the differentiability of function $u(t,x)$ given by Hopf-Lax formula on the characteristics. To this aim, let us recall the Cauchy method of characteristics for problem (\ref{2.1})-(\ref{2.2}). 
\medskip

We first suppose that $H(p)$ and $\sigma (x)$ are of class $C^2.$
\medskip

The characteristic differential equations of problem
(\ref{2.1})-(\ref{2.2}) is as follows
\begin{equation}\label{2.5}
\dot x=H_p \ ;\qquad \dot v = \ \langle H_p,p\rangle
 - \ H \ ;\qquad \dot p=0 \,\end{equation}
with initial conditions
\begin{equation}\label{2.6} x(0,y)=y \ ;\qquad v(0,y)=\sigma(y)\ ;\qquad p(0,y)=
\sigma _y(y)\ ,\quad y\in \R^n.\end{equation}

Then a characteristic strip of the problem
(\ref{2.1})-(\ref{2.2}) (i.e., a solution of the system of
differential equations (\ref{2.5}) - (\ref{2.6})) is defined by
\begin{equation}\label{2.7}\simuleq{x&=x(t,y)=y+t H_p(\sigma_y(y), \\
v&=v(t,y)=\sigma(y)+t\big (
\langle H_p(\sigma_y(y)),\sigma_y(y)\rangle-H(\sigma_y(y))\big),\\
 p&= p(t,y)\ =\ \sigma_y(y).}\end{equation}

The first component of solutions (\ref{2.7}) is called the characteristic curve (briefly, characteristics) emanating from $(0,y), y\in \R^n,$ i.e., the curve defined by
\begin{equation}\label{2.8}\mathcal C:\ x=x(t,y)=y+t H_p(\sigma_y(y)),\ t\in [0,T]. \end{equation}

Let $t_0\in (0,T].$ If for any $t\in (0,t_0)$ such that $x(t,\cdot):\ \R^n \to \R^n$ is a diffeomorphism, then $u(t,x)=v(t, x^{-1}(t,x))$ is a $C^2$ solution of the problem on the region $(0,t_0)\times \R^n.$

\medskip
Now we assume that $\sigma$ is merely a continuous function on $\R^n.$ We use following notation
$$D^\# \sigma(y)=\simuleq{& D^+\sigma(y) \cup D^-\sigma(y), \ {\text {if}}\  D^+\sigma(y) \cup D^-\sigma(y)\ne \emptyset, \\
&\{0\}, \ {\text {otherwise.}} }$$

It is known that, the subset of $\R^n$ such that $D^+\sigma (y)\ne \emptyset$ (resp. $D^-\sigma (y)\ne \emptyset)$ is dense in $\R^n.$ Now, the initial condition $p(0,y)=\sigma_y(y)$ can be replaced as follows
$$p(0,y)=q,\ \text {for some}\ q \in D^\#\sigma(y),$$
then a {\it generalized characteristics} emanating from $(0,y)$ is defined by the curve $$\mathcal C:\  x=x(t,y)=y+tH_p(q), \ q\in D^\#\sigma(y).$$

For each  $(t,x)\in (0,T]\times\R^n,$ we denote by $\ell^*(t,x)$ the set of  $y\in \R^n$ such that there is a generalized characteristic curve starting from
 $(0,y), y\in \R^n$ goes through the point $(t,x).$ In other words, $y\in \ell^*(t,x)$ if and only if
$x=y+tH_p(q),$ for some $q\in D^\#\sigma(y).$

\smallskip
Using the initial condition $q\in D^\#\sigma(y)$ for $p(0,y)$, we see that in general, there is a bundle of generalized characteristics that emanates at an initial point $(0,y), y\in \R^n.$ Nevertheless, in the following, we can single out ``right" characteristics going through a point $(t_0,x_0),\ t_0>0$ as in the case  the function $\sigma $ is of class $C^1.$
\smallskip

The following theorem is known under assumptions that $H,\sigma$ are of class $C^2,$ e.g., see \cite{cs,nh1}. It remains true for the case
 of generalized characteristic curves and the proof is similar to the earlier one. For the reader's convenience, we write down the argument.

\begin{thm} Let  $H \in C^1(\R^n)$ and $\sigma\in C(\R^n)$ satisfying (H0), (H1) and let $(t,x)\in (0,T]\times \R^n.$
Then $\ell(t,x)\subset
\ell^*(t,x)$ and the solution $u(t,x)$ of the problem (\ref{2.1})- (\ref{2.2}) given by
Hopf-Lax formula (\ref{hl1}) can be defined as follows
$$u(t,x)= \min_{y\in \R^n} \Big\{ \sigma (y)+tH^*\big (\frac
{x-y}{t}\big)\Big \}=\min_{y\in \ell^*(t,x)} \Big\{ \sigma
(y)+tH^*\big (\frac {x-y}{t}\big)\Big \}.$$ 
\end{thm}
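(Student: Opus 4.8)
The plan is to reduce the statement to the single set inclusion $\ell(t,x)\subseteq\ell^*(t,x)$. Granting this, the identity of the two minima is immediate: by the very definition \eqref{l} of $\ell(t,x)$, the minimum of $z\mapsto\zeta(t,x,z)$ over $\R^n$ is attained exactly at the points of $\ell(t,x)$, which is nonempty (and compact) by (H1); since $\ell(t,x)\subseteq\ell^*(t,x)\subseteq\R^n$, this forces
$$\min_{z\in\R^n}\zeta(t,x,z)=\min_{y\in\ell(t,x)}\zeta(t,x,y)\ \ge\ \inf_{y\in\ell^*(t,x)}\zeta(t,x,y)\ \ge\ \min_{z\in\R^n}\zeta(t,x,z),$$
so all three quantities coincide and the infimum over $\ell^*(t,x)$ is attained (e.g. at any point of $\ell(t,x)$), which is exactly what is asserted.

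Before proving the inclusion I would record two consequences of (H0) about $H^*$. First, $H^*$ is finite and of class $C^1$ on $\R^n$: superlinearity of $H$ makes the maximum in $H^*(z)=\max_w\{\langle z,w\rangle-H(w)\}$ attained, and strict convexity of $H$ makes the maximizer unique, so $\partial H^*(z)$ is a singleton for every $z$, i.e. $H^*\in C^1(\R^n)$. Second, $H^*_z$ inverts $H_p$: if $w(z)=H^*_z(z)$ denotes the maximizer just mentioned, the unconstrained smooth optimality condition (recall $H\in C^1$) gives $z=H_p(w(z))$, that is, $H_p\big(H^*_z(z)\big)=z$ for all $z\in\R^n$. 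In particular, for fixed $(t,x)$ with $t>0$ the function $g(z):=tH^*\big(\tfrac{x-z}{t}\big)$ is differentiable on $\R^n$ with $Dg(z)=-H^*_z\big(\tfrac{x-z}{t}\big)$ by the chain rule.

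Now fix $(t,x)\in(0,T]\times\R^n$ and $y_0\in\ell(t,x)$, and set $F(z):=\zeta(t,x,z)=\sigma(z)+g(z)$. Since $y_0$ minimizes $F$ over $\R^n$, Proposition \ref{p1}(i) gives $0\in D^-F(y_0)$. As $g$ is differentiable at $y_0$, Proposition \ref{p1}(iii) yields $D^-F(y_0)=D^-\sigma(y_0)+\{Dg(y_0)\}$; hence there is $q\in D^-\sigma(y_0)$ with $q=-Dg(y_0)=H^*_z\big(\tfrac{x-y_0}{t}\big)$. In particular $D^-\sigma(y_0)\ne\emptyset$, so $q\in D^\#\sigma(y_0)$ by the definition of $D^\#\sigma$. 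Applying $H_p$ and using $H_p\circ H^*_z=\mathrm{id}$ then gives $H_p(q)=\tfrac{x-y_0}{t}$, i.e. $x=y_0+tH_p(q)$ with $q\in D^\#\sigma(y_0)$; by definition of $\ell^*(t,x)$ this means $y_0\in\ell^*(t,x)$. Therefore $\ell(t,x)\subseteq\ell^*(t,x)$, and the theorem follows from the first paragraph.

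The step that needs the most care is not really an obstacle: it is the interplay between the regularity and duality properties of $H^*$ — where strict convexity and superlinearity of $H$ in (H0) are used essentially, and where one must know $H^*\in C^1$ so that Proposition \ref{p1}(iii) applies — and the bookkeeping that turns the nonsmooth optimality relation $0\in D^-F(y_0)$ into the membership $H^*_z\big(\tfrac{x-y_0}{t}\big)\in D^-\sigma(y_0)\subseteq D^\#\sigma(y_0)$.
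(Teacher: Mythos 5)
Your proof is correct and follows essentially the same route as the paper: take a minimizer $y_0\in\ell(t,x)$, use Proposition \ref{p1} together with the differentiability of $H^*$ (guaranteed by strict convexity and superlinearity of $H$) to extract $q=H^*_z\big(\tfrac{x-y_0}{t}\big)\in D^-\sigma(y_0)$, and invert via $H_p\circ H^*_z=\mathrm{id}$ to conclude $x=y_0+tH_p(q)$, hence $y_0\in\ell^*(t,x)$. The only difference is cosmetic: you spell out the conjugate-duality identity that the paper invokes as ``a differential property of the Fenchel conjugate functions,'' and you make the reduction of the two-minima identity to the inclusion explicit.
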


\medskip
\begin{proof} Let
$\zeta (t,x,y)=\sigma (y)+tH^*\big(\frac{x-y}{t}\big),\quad (t,x)\in \Omega,\ y\in \R^n.$
Then $$u(t,x)\ =\ \min_{y\in \R^n}\, \zeta (t,x,y).$$ 

Since $H$ is a strictly convex and superlinear function, then $H^*$
is continuously differentiable on $\R^n.$ Therefore the minimum of
$\zeta (t,x,.)$ is attained at some   $y\in \R^n$ which is a
stationary point of $\zeta(t,x,.). $ Then by Propositions \ref {p1} we get
$$0\in D^{-}_y\zeta (t,x,y)= D^-_y\sigma(y)-H^*_z\big(\frac{x-y}{t}\big).$$ 
Thus, $H^*_z\big(\frac{x-y}{t}\big) =q\in D^-\sigma(y).$
From this equality and by a differential property of the Fenchel conjugate
functions, we have
  $$H_p(q)=\displaystyle\frac{x-y}{t}.$$
Consequently,   $x=y+tH_p(q),$ where $q\in D^-\sigma(y),$ thus $y\in \ell^*(t,x)$. Since
then, we have
$$\min_{y\in \R^n} \Big\{ \sigma (y)+tH^*\big (\frac
{x-y}{t}\big)\Big \}=\min_{y\in \ell^*(t,x)} \Big\{ \sigma
(y)+tH^*\big (\frac {x-y}{t}\big)\Big \}$$ and \ $\ell(t,x)\subset
\ell^*(t,x).$
\end{proof}

\begin{cor} Let $(t_0,x_0)\in (0,T]\times \R^n.$ Then for each $y\in \ell(t_0,x_0),$  there is a unique characteristic curve of the form $\mathcal C:\ x=x(t,y)=y+tH_p(q)$ starting at $(0,y)$ and goes through the point $(t_0,x_0),$ where $q=H^*_z\big(\frac{x_0-y}{t_0}\big) \in D^-\sigma(y).$
\end{cor}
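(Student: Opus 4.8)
The plan is to read off the statement directly from the previous theorem, so the corollary is essentially an unwinding of the inclusion $\ell(t_0,x_0)\subset\ell^*(t_0,x_0)$ together with a uniqueness bookkeeping. First I would fix $y\in\ell(t_0,x_0)$. By definition this means $\zeta(t_0,x_0,\cdot)$ attains its minimum over $\R^n$ at $y$, so by Proposition \ref{p1}(i) we have $0\in D^-_y\zeta(t_0,x_0,y)$. Using Proposition \ref{p1}(iii) — the term $tH^*((x_0-y)/t)$ is differentiable in $y$ because $H^*\in C^1(\R^n)$ under (H0), while $D^-\sigma(y)$ may only be nonempty — this splits as $0\in D^-\sigma(y)-H^*_z((x_0-y)/t_0)$, hence $q:=H^*_z((x_0-y)/t_0)\in D^-\sigma(y)$. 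This is exactly the computation already carried out in the proof of the preceding theorem, so I would simply cite it rather than repeat it.

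The second step is to convert the relation $q=H^*_z((x_0-y)/t_0)$ into the characteristic equation. By the standard duality for Fenchel conjugates of a strictly convex superlinear function — namely $z=H_p(q)\iff q=H^*_z(z)$ — applied with $z=(x_0-y)/t_0$, we get $H_p(q)=(x_0-y)/t_0$, i.e. $x_0=y+t_0H_p(q)$. Thus the curve $\mathcal C:\ x(t,y)=y+tH_p(q)$ starts at $(0,y)$ and passes through $(t_0,x_0)$, and since $q\in D^-\sigma(y)\subset D^\#\sigma(y)$ it is a legitimate generalized characteristic in the sense defined above. This establishes existence.

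For uniqueness I would argue that the datum $q$ attached to $y$ is forced. Suppose $\mathcal C':\ x(t,y)=y+tH_p(q')$ with $q'\in D^\#\sigma(y)$ also passes through $(t_0,x_0)$; then $y+t_0H_p(q')=x_0=y+t_0H_p(q)$, so $H_p(q')=H_p(q)$, and since $H$ is strictly convex $H_p$ is injective, whence $q'=q$. Therefore the characteristic curve through $(t_0,x_0)$ emanating from a given $y\in\ell(t_0,x_0)$ is unique, and moreover the datum is precisely $q=H^*_z((x_0-y)/t_0)$, which also confirms $q\in D^-\sigma(y)$ as claimed in the statement.

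The only genuinely delicate point is the application of Proposition \ref{p1}(iii): one must be sure that $y\mapsto tH^*((x-y)/t)$ is differentiable at the minimizer, which rests on the $C^1$ regularity of $H^*$ guaranteed by strict convexity and superlinearity of $H$ (hypothesis (H0)); everything else is a routine consequence of the Legendre duality $H_p=(H^*_z)^{-1}$ and the injectivity of $H_p$. Since these facts were all set up and used in the theorem immediately preceding, the corollary's proof can be kept to a few lines, essentially pointing to that theorem and adding the injectivity remark for uniqueness.
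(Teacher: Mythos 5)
Your proof is correct and follows essentially the same route as the paper: existence is read off from the preceding theorem (the computation $0\in D^-_y\zeta(t_0,x_0,y)$ giving $q=H^*_z\bigl(\frac{x_0-y}{t_0}\bigr)\in D^-\sigma(y)$ together with the Fenchel duality $H_p(q)=\frac{x_0-y}{t_0}$), and uniqueness is a one-line observation. The only cosmetic difference is in that last step: the paper notes that $\mathcal C$ is the straight line joining $(0,y)$ and $(t_0,x_0)$ and is therefore unique, whereas you deduce $q'=q$ from the injectivity of $H_p$ (valid under the strict convexity in (H0)), which additionally pins down the datum $q$ itself.
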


\begin{proof} Since $\emptyset \ne \ell(t_0,x_0)\subset \ell^*(t_0,x_0),$ we take $y\in \ell(t_0,x_0),$ then $H^*_z\big(\frac{x_0-y}{t_0}\big) =q\in D^-\sigma(y).$ Let $\mathcal C :\ x=x(t,y)=y+tH_p(q)$ be the corresponding generalized characteristic curve defined by $y$ and $q,$ we rewrite $\mathcal C$ as $x=x(t,y) =y+t(\frac{x_0-y}{t_0}).$ Thus it is obvious that $\mathcal C$ goes through the point $(t_0,x_0)$  and it is unique since $\mathcal C$ is the straight line joining two points  $(0,y)$ and $(t_0,x_0).$
\end{proof}

Now, let $\mathcal C: \ x=x(t,y)=y+t H_p(q),\ q\in D^\#\sigma(y)$  be a generalized characteristic curve starting from $y\in \R^n.$ Suppose that $\mathcal C$ goes through a point $(t_0,x_0), \;t_0>0 $ then we have $x_0=y+t_0H_p(q).$ Thus, $\mathcal C$ can be rewritten as the form
$$x=x(t)=x_0+(t-t_0)H_p(q)$$
where $q\in D^\#\sigma(y)$ and $y$ verifies the equality $y=x_0-t_0H_p(q),$  i.e., $y\in \ell^*(t_0,x_0).$ From Corollary 3.2, we get the following definition.

\begin{defn}
We say that the characteristic curve $\mathcal C: \ x=x(t,y)=y+t H_p(q),\ q\in D^\#\sigma(y)$ is of the {\it type}  (I)  at point $(t_0,x_0) \in (0,T]\times\R^n$, if $y\in \ell(t_0,x_0).$ If $y\in \ell^*(t_0,x_0)\setminus \ell(t_0,x_0)$ then $\mathcal C$ is said to be the {\it type}  (II)  at $(t_0,x_0).$
\end{defn}

To analyze properties of type of characteristic curves at a point $(t_0,x_0) \in (0,T]\times\R^n$ we need the following lemmas.

\begin{lem} \label{ev}\cite{ev}
For each $x\in \R^n$ and $0\le s<t\le T,$ the Hopf-Lax formula (\ref{hl1}) can be represented as
$$u(t,x)=\min_{y\in \R^n}\Big\{ u(s,y) +(t-s)H^*\Big(\frac{x-y}{t-s}\Big) \Big \}.$$

\end{lem}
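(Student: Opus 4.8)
The statement to prove is the dynamic programming (semigroup) property of the Hopf--Lax formula: for $0 \le s < t \le T$ and $x \in \R^n$,
$$u(t,x) = \min_{y \in \R^n} \Big\{ u(s,y) + (t-s) H^*\Big(\frac{x-y}{t-s}\Big) \Big\}.$$
The plan is to prove two inequalities. For the ``$\le$'' direction, fix any $y \in \R^n$ and let $z \in \ell(s,y)$ realize $u(s,y) = \sigma(z) + s H^*\big(\frac{y-z}{s}\big)$ (when $s>0$; the case $s=0$ is immediate since $u(0,y)=\sigma(y)$). The idea is that the point $z$ is connected to $(t,x)$ by a broken segment through $(s,y)$, and convexity of $H^*$ lets one dominate the cost of the broken path by the cost of the straight segment. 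Concretely, since $\frac{x-z}{t} = \frac{s}{t}\cdot\frac{y-z}{s} + \frac{t-s}{t}\cdot\frac{x-y}{t-s}$ is a convex combination, Jensen's inequality gives
$$t\, H^*\Big(\frac{x-z}{t}\Big) \le s\, H^*\Big(\frac{y-z}{s}\Big) + (t-s)\, H^*\Big(\frac{x-y}{t-s}\Big).$$
Adding $\sigma(z)$ to both sides and using $u(t,x) \le \sigma(z) + t H^*\big(\frac{x-z}{t}\big)$ together with $\sigma(z) + s H^*\big(\frac{y-z}{s}\big) = u(s,y)$ yields $u(t,x) \le u(s,y) + (t-s) H^*\big(\frac{x-y}{t-s}\big)$; taking the infimum over $y$ gives the ``$\le$'' inequality.

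For the ``$\ge$'' direction, pick $z \in \ell(t,x)$ so that $u(t,x) = \sigma(z) + t H^*\big(\frac{x-z}{t}\big)$. The natural choice is to let $y$ be the point on the segment from $z$ to $x$ at ``time'' $s$, i.e. $y = z + \frac{s}{t}(x-z) = \big(1-\tfrac{s}{t}\big)z + \tfrac{s}{t}x$. Then one checks the two ratios along the segment are equal: $\frac{y-z}{s} = \frac{x-z}{t} = \frac{x-y}{t-s}$. Hence
$$u(s,y) + (t-s) H^*\Big(\frac{x-y}{t-s}\Big) \le \sigma(z) + s H^*\Big(\frac{y-z}{s}\Big) + (t-s) H^*\Big(\frac{x-z}{t}\Big) = \sigma(z) + t H^*\Big(\frac{x-z}{t}\Big) = u(t,x),$$
where the first inequality uses the definition of $u(s,\cdot)$ as a minimum (with $z$ as a competitor) and the middle equality uses the common ratio. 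Since $y$ is a valid competitor in the right-hand side, the minimum there is $\le u(t,x)$, giving ``$\ge$''. Combining the two inequalities finishes the proof.

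The main technical points to be careful about are: first, the existence of minimizers $z \in \ell(s,y)$ and $z \in \ell(t,x)$, which is guaranteed by hypothesis (H1) together with lower semicontinuity of $\zeta$ (as already noted in the text after \eqref{l}); and second, the edge case $s = 0$, where one should interpret $u(0,y) = \sigma(y)$ directly rather than through a minimization, so the ``$\le$'' direction reduces to the defining formula \eqref{hl1} and the ``$\ge$'' direction is trivial. I do not expect a genuine obstacle here --- the only subtlety is the bookkeeping with the convex-combination identity and making sure the chosen intermediate point $y$ is finite and legitimate; the superlinearity and strict convexity in (H0) are exactly what make $H^*$ finite-valued and the Jensen step valid.
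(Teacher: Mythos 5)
Your proof is correct and is essentially the standard dynamic-programming argument for the Hopf--Lax formula from the cited reference (Evans): the ``$\le$'' direction via the convex-combination identity $\frac{x-z}{t}=\frac{s}{t}\frac{y-z}{s}+\frac{t-s}{t}\frac{x-y}{t-s}$ and Jensen, the ``$\ge$'' direction via the intermediate point $y=(1-\frac{s}{t})z+\frac{s}{t}x$ on the minimizing segment. The paper itself gives no proof of this lemma, only the citation, and your argument (including the $s=0$ edge case and the existence of minimizers under (H1)) fills it in correctly.
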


A useful property of  strictly convex function can be deduced from the following.
\begin{lem} \label{h3}
Let $v$ be a convex function and $D=\textrm{dom}\; v\subset \R^n.$ Suppose that there exist $p,\,  p_0\in D,\ p\ne p_0$ and $y\in \partial v(p_0)$ such that 
$$\langle y,p-p_0\rangle = v(p) -v(p_0).$$

Then for all $z$ in the straight line segment $[p,p_0]$ we have 
$$v(z) =\langle y,z\rangle -\langle y,p_0\rangle +v(p_0).$$

Moreover, $y\in \partial v(z)$ for all $z\in [p,p_0].$
\end{lem}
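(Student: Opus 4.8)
The plan is to exploit convexity of $v$ directly: the hypothesis says that the affine function
$$\ell(z)=\langle y, z-p_0\rangle + v(p_0)$$
agrees with $v$ at the two points $p_0$ and $p$, and since $y\in\partial v(p_0)$ we have $v(z)\ge \ell(z)$ for all $z\in D$. So the first step is to record these two facts: $v\ge \ell$ globally, and $v(p_0)=\ell(p_0)$, $v(p)=\ell(p)$.

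The second step is the convexity argument on the segment. Fix $z=\lambda p + (1-\lambda) p_0$ with $\lambda\in[0,1]$. Then convexity of $v$ gives
$$v(z)\le \lambda v(p) + (1-\lambda) v(p_0) = \lambda \ell(p) + (1-\lambda)\ell(p_0) = \ell(z),$$
where the last equality is just affineness of $\ell$. Combined with $v(z)\ge\ell(z)$ from the subgradient inequality, this forces $v(z)=\ell(z)$ for every $z\in[p,p_0]$, which is precisely the claimed formula $v(z)=\langle y,z\rangle - \langle y,p_0\rangle + v(p_0)$.

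For the "moreover" part, I would fix $z_0\in[p,p_0]$ and show $y\in\partial v(z_0)$, i.e. $v(w)\ge v(z_0)+\langle y, w-z_0\rangle$ for all $w\in D$. Using $v(w)\ge \ell(w)=\langle y, w-p_0\rangle + v(p_0)$ and the already-established identity $v(z_0)=\langle y, z_0-p_0\rangle + v(p_0)$, subtract to get
$$v(w)-v(z_0)\ge \langle y, w-p_0\rangle - \langle y, z_0-p_0\rangle = \langle y, w-z_0\rangle,$$
which is exactly the subgradient inequality at $z_0$. Hence $y\in\partial v(z_0)$.

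There is essentially no obstacle here; the only point requiring a word of care is that $[p,p_0]\subset D=\operatorname{dom} v$, so that $v$ is finite and the convexity inequality applies on the whole segment — this follows because $D$ is convex (the effective domain of a convex function) and contains both endpoints $p$ and $p_0$. Everything else is a direct pinching between the subgradient lower bound and the convexity upper bound.
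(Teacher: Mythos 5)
Your proof is correct and complete. Note that the paper itself does not actually prove this lemma: its ``proof'' consists solely of a citation to an external preprint (\cite{nh3}), so there is no in-paper argument to compare against. Your self-contained argument --- pinching $v$ on the segment between the affine lower bound $\ell(z)=\langle y,z-p_0\rangle+v(p_0)$ coming from the subgradient inequality and the upper bound coming from convexity, then reusing $v\ge\ell$ together with $v(z_0)=\ell(z_0)$ to transfer the subgradient to every point of $[p,p_0]$ --- is the standard and essentially canonical way to establish this fact, and your remark that $[p,p_0]\subset D$ because the effective domain is convex correctly covers the one point where finiteness matters. Nothing is missing.
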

\begin{proof} See \cite{nh3}.
\end{proof}

Now we investigate  properties of characteristic curves of type (I), (II) at $(t_0,x_0)$ given by the following theorems.

\medskip

\begin{thm}\label{cha1} Assume (H0), (H1).  Let $(t_0,x_0)\in (0,T] \times \R^n,\ q=H^*_z(\frac{x_0-y_0}{t_0})\in D^-\sigma(y_0)$ where $y_0\in \ell(t_0,x_0)$ and let 
$$ \mathcal C: x= x(t)= x_0 +(t-t_0)H_p(q), $$
be a generalized characteristic curve of type (I) at $(t_0,x_0).$  Then $y_0\in \ell (t,x)$ for all $(t,x)\in \mathcal C, \ 0< t\le t_0.$ Moreover, $\ell (t,x) =\{y_0\}$ for all $(t,x)\in \mathcal C, \ 0\le t < t_0.$ 
\end{thm}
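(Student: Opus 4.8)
The plan is to exploit Lemma \ref{ev} (the semigroup property of the Hopf-Lax formula) together with the fact that along the characteristic $\mathcal C$ of type (I), the point $y_0$ is an actual minimizer at the endpoint $(t_0,x_0)$, and the straight-line segment joining $(0,y_0)$ to $(t_0,x_0)$ passes through every $(t,x)\in\mathcal C$ with $0<t\le t_0$. First I would fix $(t,x)\in\mathcal C$ with $0<t<t_0$, so that $x=x(t)=y_0+t H_p(q)=x_0-(t_0-t)H_p(q)$, and write
$$u(t_0,x_0)\ \le\ u(t,x)+(t_0-t)H^*\Big(\frac{x_0-x}{t_0-t}\Big)\ \le\ \zeta(t,x,y_0)+(t_0-t)H^*\Big(\frac{x_0-x}{t_0-t}\Big),$$
using Lemma \ref{ev} for the first inequality and the definition of $u(t,x)$ as a minimum for the second. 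On the other hand, since $\frac{x-y_0}{t}=\frac{x_0-y_0}{t_0}=H_p(q)$, the convexity of $H^*$ (in fact the identity $tH^*(\frac{x_0-y_0}{t_0}) = tH^*(H_p(q))$ is additive along the segment) gives
$$\zeta(t,x,y_0)+(t_0-t)H^*\Big(\frac{x_0-x}{t_0-t}\Big)\ =\ \sigma(y_0)+tH^*(H_p(q))+(t_0-t)H^*(H_p(q))\ =\ \zeta(t_0,x_0,y_0)\ =\ u(t_0,x_0).$$
Hence all inequalities are equalities, which forces $u(t,x)=\zeta(t,x,y_0)$, i.e.\ $y_0\in\ell(t,x)$; this proves the first assertion (and, taking $t\to 0^+$ appropriately, also handles the degenerate endpoint $t=0$ in the second assertion).

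For the uniqueness claim — that $\ell(t,x)=\{y_0\}$ for $0\le t<t_0$ — I would argue by contradiction. Suppose $y_1\in\ell(t,x)$ with $y_1\ne y_0$ for some $t<t_0$. Then $u(t,x)=\zeta(t,x,y_1)=\sigma(y_1)+tH^*(\frac{x-y_1}{t})$, and feeding this minimizer forward via Lemma \ref{ev},
$$u(t_0,x_0)\ \le\ u(t,x)+(t_0-t)H^*\Big(\frac{x_0-x}{t_0-t}\Big)\ =\ \sigma(y_1)+tH^*\Big(\frac{x-y_1}{t}\Big)+(t_0-t)H^*\big(H_p(q)\big).$$
Since equality must hold here (the right side is $\ge u(t_0,x_0)$ by definition and we already know $u(t_0,x_0)$ equals the left side chained through $(t,x)$), I get that both $y_1\in\ell(t_0,x_0)$ via the broken path, and — by the equality case in the convexity/Jensen inequality for $H^*$ applied to $tH^*(\frac{x-y_1}{t})+(t_0-t)H^*(\frac{x_0-x}{t_0-t})\ge t_0 H^*(\frac{x_0-y_1}{t_0})$ — that $\frac{x-y_1}{t}=\frac{x_0-x}{t_0-t}$. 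Strict convexity of $H$ yields that $H^*$ is differentiable but not affine on any segment unless the endpoints coincide; more precisely, using Lemma \ref{h3} with $v=H^*$, the equality in Jensen forces $H^*$ to be affine along $[\frac{x-y_1}{t},\frac{x_0-x}{t_0-t}]$, hence these two points are equal, so $x=\frac{t}{t_0}x_0+\frac{t_0-t}{t_0}y_1$, i.e.\ $(0,y_1)$, $(t,x)$, $(t_0,x_0)$ are collinear. Combined with the collinearity of $(0,y_0)$, $(t,x)$, $(t_0,x_0)$ established above, both lines pass through the two distinct points $(t,x)$ and $(t_0,x_0)$, forcing $y_0=y_1$, a contradiction.

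The step I expect to be the main obstacle is the rigorous handling of the equality case in the Jensen/convexity inequality for $H^*$ — specifically, extracting from "$tH^*(a)+(t_0-t)H^*(b)=t_0H^*(\frac{ta+(t_0-t)b}{t_0})$" the conclusion $a=b$. This is exactly where strict convexity of $H$ (equivalently, essential smoothness plus the absence of affine pieces in $H^*$ on the relevant range, or a direct appeal to Lemma \ref{h3}) must be invoked carefully; one must make sure that the relevant arguments of $H^*$ lie in its domain and that $H^*$ is genuinely not affine there. Everything else is bookkeeping with the semigroup identity of Lemma \ref{ev} and the geometry of straight-line characteristics. I would also remark that the endpoint case $t=0$ in the second assertion is immediate since $\ell(0,x)$ in the limiting sense is $\{x_0-t_0 H_p(q)\}=\{y_0\}$ by construction of $\mathcal C$.
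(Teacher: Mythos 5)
Your proof of the first assertion ($y_0\in\ell(t,x)$ for $0<t\le t_0$) is essentially the paper's: both chain the semigroup identity of Lemma~\ref{ev} with the collinearity relation $\frac{x-y_0}{t}=\frac{x_0-y_0}{t_0}=H_p(q)=\frac{x_0-x}{t_0-t}$ and read off equality throughout. For the uniqueness assertion you take a genuinely different route. The paper fixes $y\ne y_0$, sets $\eta(t,y)=\zeta(t,x(t),y)-\zeta(t,x(t),y_0)$ along the characteristic, computes $\eta'_t$ explicitly, and shows it is strictly negative via the gradient inequality for $H^*$ together with Lemma~\ref{h3}; since $\eta(t_0,y)\ge 0$, strict monotonicity gives $\eta(t,y)>0$ for $t<t_0$. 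You instead feed a hypothetical second minimizer $y_1\in\ell(t,x)$ forward through Lemma~\ref{ev} and close the loop with the equality case of the convexity inequality $tH^*(a)+(t_0-t)H^*(b)\ge t_0H^*\bigl(\frac{ta+(t_0-t)b}{t_0}\bigr)$, which forces $a=b$ and hence $y_1=y_0$; this avoids differentiating $\zeta$ in $t$ at the price of a second appeal to the dynamic programming principle, while the paper's version yields a little more (monotone decay of the excess $\eta$ along the entire characteristic). Both arguments ultimately rest on the same fact --- that $H^*$ is affine on no nondegenerate segment --- and, as you rightly flag, that fact requires differentiability of $H$ (at $q$), not merely strict convexity; this is implicit in the theorem since $H_p(q)$ appears in the statement, and the paper makes the same tacit use when it declares $H^*$ ``strictly convex.'' The endpoint $t=0$ of the second assertion is treated loosely in both proofs, since $\zeta(0,x,\cdot)$ is defined only in the limiting sense; your closing remark handles it adequately. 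I find no gap: the proposal is correct.
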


\begin{proof} 
Let $(t_1,x_1)\in \mathcal C,\ 0< t_1\le t_0.$ By Hopf-Lax formula, one has
$$u(t_0,x_0)=\sigma(y_0) +t_0H^*(\frac{x_0-y_0}{t_0}),\ y_0\in \ell(t_0,x_0).$$
By Lemma \ref{ev}, $u(t_0,x_0)=\min_{z\in\R^n}\{u(t_1,z)+(t_0-t_1)H^*(\frac{x_0-z}{t_0-t_1})\}$ thus,
$$u(t_0,x_0)\le u(t_1,x_1)+(t_0-t_1)H^*(\frac{x_0-x_1}{t_0-t_1}).$$ 
Therefore,
$$\sigma(y_0)+t_0H^*(\frac{x_0-y_0}{t_0})\le u(t_1,x_1)+(t_0-t_1)H^*(\frac{x_0-x_1}{t_0-t_1}).$$
Since the points $(0, y_0),\, (t_0,x_0),\, (t_1,x_1)$ belong to $\mathcal C,$ we have
$$\frac{x_1-x_0}{t_1-t_0} =\frac{x_1-y_0}{t_1}=\frac{x_0-y_0}{t_0}=H_p(q),$$ then 
$$\sigma(y_0) +t_1H^*(\frac{x_1-y_0}{t_1})\le u(t_1,x_1).$$ Thus, $y_0\in \ell(t_1,x_1).$

\smallskip

Next, we prove that $\ell(t,x)=\{y_0\}$ for all $(t,x)\in \mathcal C, \; 0<t<t_0.$ To this end, take an arbitrary $y\in \R^n, y\ne y_0$ and denote by 
$$ \eta(t,y) =\zeta (t,x,y)-\zeta (t,x,y_0),\ (t,x)\in \mathcal C, \ t\in (0,t_0],$$
where $\zeta (t,x,y)=\sigma(y)-tH^*(\frac{x-y}{t}).$ Then
\begin{equation}\label{2.13} \eta(t,y)=\sigma(y)-\sigma(y_0) +t\big(H^*(\frac{x-y}{t}) -  H^*(\frac{x-y_0}{t})\big ).\end{equation} 
for $t\in (0, t_0], \ x=x(t).$ 
\smallskip

We shall prove that $\eta(t,y)>0$  for all $t\in (0,t_0).$
\smallskip

It is obviously that, $\eta(t_0,y)\ge 0.$ 
Note that $x=x(t)=x_0+(t-t_0)H_p(q),$ thus $y_0=x(0)=x_0-t_0H_p(q).$  Then from (\ref{2.13}) with a simple calculation, we obtain
$$\eta'_t(t,y)=H^*(\frac{x-y}{t})-H^*(\frac{x-y_0}{t}) +\frac 1t\Big\langle y-y_0,H^*_z(\frac{x-y}{t}) \Big\rangle.$$

Since $H^*$ is a strictly convex function and $y\ne y_0,$ we use a monotone property of subdifferential of convex function and Lemma \ref{h3} to get
$$H^*(\frac{x-y}{t}) -  H^*(\frac{x-y_0}{t}) < \frac 1t\Big \langle {y_0-y}, H^*_z(\frac {x-y}{t})\Big \rangle .$$
Thus, $\eta'_t(t,y)<0,\ \forall t\in (0,t_0)$ and the function $\eta(t,y)$ is  strictly decreasing on the interval $[0, t_0].$ Therefore, 
$$\eta(t,y)>\eta(t_0,y)\ge 0,\ \forall t\in [0,t_0).$$ This means that, if $y\ne y_0$ then $y\notin \ell(t_0,x_0).$ The theorem now is proved.
\end{proof}

{\bf Example.} Consider the following Cauchy problems

$$\begin{cases}\frac{\partial u}{\partial t} + \frac 12 |u_x|^2&=0\, , \,\,
(t,x)\in \Omega =(0,T)\times \R,\\
u(0,x)&=\sigma(x)\, , \,\, x\in
\R.
\end{cases}$$

{\bf (i)}  Let $\sigma(x) =-|x|.$ Then Hopf formula gives viscosity
$$u(t,x)=\min_{y\in \R} \{-|y|+\frac 1{2t} (x-y)^2\}= -|x|-\frac t2.$$
 For $(t_0,x_0)=(1,0),$ we see that $\ell(1,0)=\{1,-1\}$ and $D^-\sigma(0) =\emptyset, D^+\sigma(0) =[-1,1].$ The characteristic curves $\mathcal C$ of the problem have the form $x=x(t,y)=y+tq,\ q\in D^\#\sigma(y).$ Suppose that $\mathcal C$ goes through $(1,0),$  then $0=y+q.$ If $\mathcal C$ starts from $(0,0)$ then $q=0\in D^\#\sigma(0),$ otherwise, where $y\ne 0,$ then $q=\pm 1,$ thus $y=\mp 1.$ Thus, the characteristic curves going through $(1,0)$ consist of two of type (I): $x=(1,t) =1-t,\  x=x(-1,t)=-1+t$ and one of type (II) is $x= x(0,t)=0.$ 
\smallskip
Note that a bundle of characteristic curves emanating from $(0,0)$ is 
$ x=x(t,0)=\alpha t,$ where $\alpha \in [-1,1].$
\smallskip

{\bf (ii)} Let $\sigma(x) =-|x|.$ Then Hopf formula gives a viscosity 
$$u(t,x)=\begin{cases} x-\frac 12, & x>t\\
-x-\frac t2, & x<t\\
\frac{x^2}{2t}, & |x|\le t,\end{cases}$$
and $u(t,x)$ is of class $C^1(\Omega).$ 
For any $(t_0,x_0)\in \Omega$ there is a unique characteristic curve going through this point, since $u(t,x)$ is differentiable at there. 
\smallskip

In this case, a bundle of characteristic curves emanating from $(0,0)$ is also
$ x=x(t,0)=\alpha t,$ where $\alpha \in [-1,1].$ All characteristic curves in the bundle are of type (II), except $x=x(t,0)=0$ is of type (I).
\medskip

By above examples, we see that, if the characteristic curve $\mathcal C$ is of type (II) at $(t_0,x_0)$ then it may be of type (II) at any point $(t,x)\in \mathcal C, \ 0\le t\le t_0.$ Nevertheless, if the given data are of class $C^2(\R^n),$ we have the following:

\begin{thm} \label{cha2} Assume (H0), (H1).  In addition, suppose that $H,\ \sigma$ are of class $C^2.$  Take $(t_0,x_0)\in \Omega$ and let $\mathcal C: x =x(t) =x_0 +t H_p(\sigma_y(y_0)) $ be a characteristic curve of type (II) at $(t_0,x_0).$   Then there exists $\theta \in (0,t_0)$ such that $\mathcal C$ is of type (I) at $(\theta, x(\theta))$ and $\mathcal C$ is of type (II) for all point $(t,x) \in \mathcal C,\ t\in (\theta, t_0].$
\end{thm}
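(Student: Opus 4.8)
The plan is to follow the flavour of the proof of Theorem \ref{cha1}, but now exploiting the $C^2$ regularity of the data. Since $\mathcal C$ is of type (II) at $(t_0,x_0)$, we have $y_0\in \ell^*(t_0,x_0)\setminus\ell(t_0,x_0)$, i.e. $x_0=y_0+t_0H_p(\sigma_y(y_0))$ but $y_0$ does not realise the minimum defining $u(t_0,x_0)$. First I would introduce, for each $t\in(0,t_0]$ and with $x=x(t)$ on $\mathcal C$, the quantity
\[
 g(t)=\zeta(t,x(t),y_0)-u(t,x(t))=\sigma(y_0)+tH^*\Bigl(\tfrac{x(t)-y_0}{t}\Bigr)-u(t,x(t))\ \ge\ 0 ,
\]
the ``gap'' between the value along the characteristic through $y_0$ and the true Hopf--Lax value. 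By hypothesis $g(t_0)>0$. The key point will be to show that $g$ is continuous on $(0,t_0]$ and that $g(t)\to 0$ as $t\to 0^+$: indeed $x(t)-y_0=tH_p(\sigma_y(y_0))$, so $tH^*((x(t)-y_0)/t)=tH^*(H_p(\sigma_y(y_0)))$, which is linear in $t$ and tends to $0$, while $u(t,x(t))\to\sigma(y_0)$ by the initial condition (continuity of the viscosity solution up to $t=0$ together with $x(t)\to y_0$). Hence $g(0^+)=0<g(t_0)$, and by the intermediate value theorem there is $\theta\in(0,t_0)$ with $g(\theta)=0$, i.e. $y_0\in\ell(\theta,x(\theta))$, so $\mathcal C$ is of type (I) at $(\theta,x(\theta))$.

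Next I would pin down $\theta$ as the \emph{largest} such value, $\theta=\sup\{t\in(0,t_0]: g(t)=0\}$, which again lies in $(0,t_0)$ since $g(t_0)>0$ and $g$ is continuous, and satisfies $g(\theta)=0$. By Theorem \ref{cha1} applied at the point $(\theta,x(\theta))$ — noting that $\mathcal C$ restricted to $[0,\theta]$ is exactly the type-(I) characteristic through $y_0$ guaranteed by Corollary 3.2, with $q=\sigma_y(y_0)=H^*_z((x(\theta)-y_0)/\theta)$ — we get $y_0\in\ell(t,x(t))$ for all $0<t\le\theta$, and in fact $\ell(t,x(t))=\{y_0\}$ for $0\le t<\theta$. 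It remains to prove the assertion for $t\in(\theta,t_0]$, namely that $\mathcal C$ is of type (II) there, i.e. $g(t)>0$ on $(\theta,t_0]$. By maximality of $\theta$ we already know $g(t)\neq 0$ on $(\theta,t_0]$; since $g\ge 0$ and $g$ is continuous with $g(t_0)>0$, a sign-change would force a zero in $(\theta,t_0)$, contradiction. Hence $g(t)>0$ for all $t\in(\theta,t_0]$, which is precisely the statement that $\mathcal C$ is of type (II) at every such point.

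The main obstacle I anticipate is the continuity of $t\mapsto u(t,x(t))$ at $t=0^+$ combined with making the limit $g(0^+)=0$ fully rigorous: one must invoke the locally Lipschitz continuity of $u$ on $\Omega$ from Theorem 2.8(a) together with property (i) of the viscosity solution (attainment of the initial data), and check that along the line $x(t)=y_0+tH_p(\sigma_y(y_0))$ both terms converge to $\sigma(y_0)$ at comparable rates so that the difference genuinely vanishes — here the $C^2$ (indeed just $C^1$) hypothesis on $H$ and $\sigma$ is what guarantees $H_p(\sigma_y(y_0))$ is a well-defined finite vector and $H^*(H_p(\sigma_y(y_0)))=\langle H_p(\sigma_y(y_0)),\sigma_y(y_0)\rangle-H(\sigma_y(y_0))$ is finite. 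A secondary point needing care is the strictness in Theorem \ref{cha1}: to conclude type (I) holds \emph{only} at $(\theta,x(\theta))$ and not beyond, I rely on the ``$\ell(t,x)=\{y_0\}$ for $t<\theta$'' half of that theorem to rule out $\theta$ being non-maximal from below, but the behaviour just above $\theta$ is controlled purely by the continuity/sign argument on $g$ rather than by any monotonicity of $\eta$, so no extra convexity input is needed there.
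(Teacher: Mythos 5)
There is a genuine gap, and it sits exactly where the $C^2$ hypothesis ought to enter. Your intermediate value argument does not work: you only establish $g\ge 0$ on $(0,t_0]$, $g(t_0)>0$, and $g(t)\to 0$ as $t\to 0^+$. A one-sided limit at an endpoint \emph{not in the domain} does not force a zero of $g$ inside $(0,t_0)$; the function $g(t)=t/2$ satisfies all three properties and never vanishes on $(0,t_0]$. This is not a hypothetical loophole — the paper's own Example (i) realizes it: for $\sigma(x)=-|x|$, $H(p)=\tfrac12|p|^2$, the characteristic $x\equiv 0$ through $(1,0)$ has $\zeta(t,0,0)=0$ and $u(t,0)=-t/2$, so $g(t)=t/2>0$ for every $t\in(0,1]$, i.e.\ the curve is of type (II) all the way down to $t=0$ and no $\theta$ exists. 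Your argument uses the $C^2$ assumption only to make $H_p(\sigma_y(y_0))$ well defined (which needs only $C^1$), so if it were correct it would prove the theorem for data for which the theorem is false. The missing ingredient is precisely the step that makes the set $\{t\in(0,t_0]: y_0\in\ell(t,x(t))\}$ nonempty, and that is where $C^2$ regularity must be used — for instance via a short-time argument: $y_0$ is always a critical point of $y\mapsto\zeta(t,x(t),y)$ along $\mathcal C$, and for $C^2$ data with $H^*$ strongly convex the second-order condition $D^2\sigma(y_0)+t^{-1}H^*_{zz}\succ 0$ (together with (H1) confining minimizers to a compact set) forces $y_0$ to be the global minimizer for all sufficiently small $t$, so that every characteristic is of type (I) near $t=0$. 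The paper itself does not supply this argument either; it defers to Theorem 2.11 of the cited preprint, which is presumably where this work is done.

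The remainder of your proposal is sound and matches what one would expect: once nonemptiness is secured, taking $\theta$ to be the supremum of the zero set of $g$, invoking Theorem \ref{cha1} to get type (I) (indeed $\ell(t,x(t))=\{y_0\}$) for $t<\theta$, and using continuity plus $g\ge 0$ to get $g>0$ on $(\theta,t_0]$ all go through. So the fix is local: replace the intermediate value step by a genuine short-time type-(I) statement that exploits $C^2$ regularity.
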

\begin{proof} The proof of this theorem is similar to the one of Thm. 2.11 in \cite{nh3}.
\end{proof}

For a locally Lipschitz function, the usage of notions of sub- and superdifferential as well as reachable gradients to study its differentiability is effective. We use Theorems \ref {cha1}, \ref{di} to establish a relationship between $\ell(t_0,x_0)$ and the set of reachable gradients. 

\begin{defn} Let $v=v(t,x): \ \Omega \to \R$ and let $(t_0,x_0)\in \Omega.$ 
\smallskip

We define the set $D^*v(t_0,x_0)$ of {\it reachable gradients} of $v(t,x)$ at $(t_0,x_0)$ as follows:

$\R^{n+1} \ni (p,q)\in D^*v(t_0,x_0) $ if and only if there exists a sequence $(t_k,x_k)_k\subset \Omega\setminus \{(t_0,x_0)\}$ such that $v(t,x)$ is differentiable at $(t_k,x_k)$ and,
$$(t_k,x_k)\to (t_0,x_0), \ (v_t(t_k,x_k), v_x(t_k,x_k))\to (p,q)\  \text{ as}\  k\to \infty.$$  
\end{defn}

If $v(t,x)$ is a locally Lipschitz function,  then $D^*v(t,x) \ne \emptyset$  and it is a compact set, see \cite{cs}, p.54.
\smallskip

Now let $u(t,x)$ be the Hopf-Lax formula defined by (\ref {hl1}) and let $(t_0,x_0)\in \Omega.$ We denote by 
$$\mathcal H(t_0,x_0)=\{(-H(q), q)\}\ \text{where}\  q=H^*_z(\frac{x_0-y_0}{t_0})\in D^-\sigma (y_0),\ \text{and}\  y_0\in\ell(t_0,x_0).$$
 Then a relationship between $D^*u(t_0,x_0)$ and the set $\ell (t_0,x_0)$ is given by the following theorem.

\begin{thm}\label{rg}
Assume (H0) and (H1). Let $u(t,x)$ be the Hopf-Lax formula for problem (2.1)-(2.2). Then for all $(t_0,x_0)\in \Omega,$ we have
$$D^*u(t_0,x_0)=\mathcal H(t_0,x_0).$$
\end{thm}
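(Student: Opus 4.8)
The plan is to prove the two inclusions $D^*u(t_0,x_0)\subset \mathcal H(t_0,x_0)$ and $\mathcal H(t_0,x_0)\subset D^*u(t_0,x_0)$ separately, using the differentiability results of Section 2 (Theorem \ref{di} and its remarks) together with the characteristic analysis of Theorem \ref{cha1}.

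For the inclusion $D^*u(t_0,x_0)\subset \mathcal H(t_0,x_0)$, I would take $(p,q)\in D^*u(t_0,x_0)$ with an approximating sequence $(t_k,x_k)\to(t_0,x_0)$ at which $u$ is differentiable and $(u_t(t_k,x_k),u_x(t_k,x_k))\to(p,q)$. At each such point Theorem 2.8(c) (or the Remark after it) gives $\ell(t_k,x_k)=\{y_k\}$ a singleton, and the explicit differentiation of Hopf-Lax (as computed in the proof of Theorem \ref{di}) yields $u_x(t_k,x_k)=H^*_z\big(\frac{x_k-y_k}{t_k}\big)$ and $u_t(t_k,x_k)=H^*\big(\frac{x_k-y_k}{t_k}\big)-\frac1{t_k}\langle x_k-y_k,H^*_z(\frac{x_k-y_k}{t_k})\rangle$. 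Using the Fenchel duality identity $H_p(H^*_z(z))=z$ and $H^*(z)-\langle z,H^*_z(z)\rangle = -H(H^*_z(z))$, these become $u_x(t_k,x_k)=q_k$ and $u_t(t_k,x_k)=-H(q_k)$ where $q_k=H^*_z(\frac{x_k-y_k}{t_k})\in D^-\sigma(y_k)$. By (H1) the $y_k$ stay bounded, so pass to a subsequence $y_k\to y^*$; upper semicontinuity of $\ell^*$ (or of the subdifferential map together with the characteristic relation $x_k=y_k+t_kH_p(q_k)$) gives $q_k\to q^*$ with $q^*=H^*_z(\frac{x_0-y^*}{t_0})$, and since $\ell$ is upper semicontinuous, $y^*\in\ell(t_0,x_0)$. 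Continuity of $H$ then gives $(p,q)=(-H(q^*),q^*)\in\mathcal H(t_0,x_0)$.

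For the reverse inclusion $\mathcal H(t_0,x_0)\subset D^*u(t_0,x_0)$, I would fix $y_0\in\ell(t_0,x_0)$ and set $q=H^*_z(\frac{x_0-y_0}{t_0})\in D^-\sigma(y_0)$; the goal is to produce a sequence of differentiability points of $u$ converging to $(t_0,x_0)$ along which the gradient tends to $(-H(q),q)$. The natural choice is to move backward in time along the characteristic $\mathcal C: x(t)=x_0+(t-t_0)H_p(q)$: by Theorem \ref{cha1}, for every $t\in(0,t_0)$ the point $(t,x(t))$ has $\ell(t,x(t))=\{y_0\}$, a singleton, so by Theorem \ref{di} $u$ is differentiable at $(t,x(t))$, and by the same explicit formulas as above $u_x(t,x(t))=H^*_z(\frac{x(t)-y_0}{t})=H^*_z(\frac{x_0-y_0}{t_0})=q$ and $u_t(t,x(t))=-H(q)$ for all such $t$. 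Letting $t\uparrow t_0$ gives the sequence $(t_k,x_k)=(t_k,x(t_k))\to(t_0,x_0)$ with constant gradient $(-H(q),q)$, hence $(-H(q),q)\in D^*u(t_0,x_0)$.

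The main obstacle I anticipate is the continuity/convergence argument in the first inclusion: establishing that $q_k=H^*_z(\frac{x_k-y_k}{t_k})$ converges to the correct limit $q^*=H^*_z(\frac{x_0-y^*}{t_0})$ and that $y^*\in\ell(t_0,x_0)$. This requires combining the boundedness of $(y_k)$ from (H1), the continuity of $H^*_z$ (which follows from strict convexity and superlinearity of $H$, making $H^*\in C^1$), and the upper semicontinuity of the multifunction $(t,x)\mapsto\ell(t,x)$ — the latter cited from \cite{vhs}. One must be slightly careful that $t_k$ stays bounded away from $0$ (true since $t_k\to t_0>0$) so that all the quotients $\frac{x_k-y_k}{t_k}$ and the mean-value expressions are well behaved. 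Everything else — the explicit gradient formulas and the Fenchel identities — is routine once Theorem \ref{di} and the duality $H\leftrightarrow H^*$ are in hand.
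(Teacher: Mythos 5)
Your proposal is correct and follows essentially the same route as the paper: one inclusion is obtained by running Theorem \ref{cha1} together with Theorem \ref{di} along the type (I) characteristic through $(t_0,x_0)$ and letting $t\uparrow t_0$ (where the gradient is constant and equal to $(-H(q),q)$), and the reverse inclusion uses the boundedness of the minimizers $y_k$ from (H1), a convergent subsequence, and the upper semicontinuity of $(t,x)\mapsto\ell(t,x)$. The only difference is cosmetic (you treat the two inclusions in the opposite order and spell out the Fenchel identities slightly more explicitly than the paper does).
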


\begin{proof}

Let $(p_0,q_0)$ be an element of $\mathcal H(t_0,x_0),$ then $p_0=-H(q_0)$  where $q_0=H^*_z(\frac{x_0-y_0}{t_0})\in D^-\sigma(y_0)$ for some $ y_0\in \ell(t_0,x_0).$ Let $\mathcal C$ be the characteristic curve of type (I) at $(t_0,x_0)$ starting form $y_0.$ By Theorem (\ref{cha1}), the solution $u(t,x)$ is differentiable at all points $(t,x)\in \mathcal C, \ t\in [0, t_0).$ Put $t_k=t_0-1/k, $ then $\mathcal C\ni (t_k,x_k) \to (t_0,x_0)$ and $(u_t(t_k,x_k),u_x(t_k,x_k))=(-H(q_0),q_0)\to (-H(q_0), q_0)\in D^*u(t_0,x_0)$ as $k\to \infty.$  Therefore, $\mathcal H(t_0,x_0)\subset D^*u(t_0,x_0).$ 

On the other hand, let $(p,q)\in D^*u(t_0,x_0)$ and $(t_k,x_k)_k\subset \Omega\setminus \{(t_0,x_0)\}$ such that $u(t,x)$ is differentiable at $(t_k,x_k)$ and
$(t_k,x_k)\to (t_0,x_0), \ (u_t(t_k,x_k), u_x(t_k,x_k))\to (p,q)\  \text{as}\  k\to \infty.$ For each $k\in \N,$ we take $y_k\in \ell(t_k,x_k) $ such that $ (u_t(t_k,x_k), u_x(t_k,x_k))= (-H(q_k),q_k)$ where $q_k=H^*_z(\frac{x_k-y_k}{t_k})\in D^-\sigma(y_k).$ Since $(y_k)_k$ is bounded, we can assume that $y_k$ converges to $y_0.$ Since multivalued function $\ell(t,x)$ is u.s.c, then letting $k\to \infty,$ we see that $y_0\in \ell(t_0,x_0)$ and $p=\lim_{k\to \infty} -H(q_k) =-H(q).$ Thus $(p,q)\in \mathcal H(t_0,x_0).$ The theorem is then proved.
\end{proof}

\begin{rem}
From Theorems \ref{di} and \ref{rg}, we see that if $u(t,x)$ is differentiable at $(t,x)$ then
$$Du(t,x)=(u_t(t,x), D_xu(t,x))=(H^*(\frac {x-y_0}{t})-\frac 1{t}\langle x-y_0,H^*_z(\frac{x-y_0}{t})\rangle,H^*_z(\frac{x-y_0}{t}),$$ for the unique $y_0\in \ell(t,x).$

If $u(t,x)$ is not differentiable at $(t,x),$ then $D^+u(t,x)={\rm co}\mathcal H(t,x)$ and $D^-u(t,x)=\emptyset.$ Therefore it may be helpful to use this result to study some properties of derivatives of the solution of the problem (\ref{2.1})-(\ref{2.2}) defined by Hopf-Lax formula.

\end{rem}

\section{Regularity of Hopf-Lax formula}

In this section we will study the strips of the form $V=(0,t_*)\times \R^n\subset \Omega$ such that $u(t,x)$ is continuously differentiable on them. 

\begin{thm} \label{di1} Assume (H0), (H1). Let $u(t,x)$ be the viscosity solution of problem (\ref{2.1})-(\ref{2.2}) defined by Hopf-type formula (\ref{hl1}). Suppose that there exists $t_* \in (0,T)$ such that the mapping: $y\mapsto x(t_*,y)=y+t_* H_p(\sigma_y (y))$ is injective. Then $u(t,x)$ is continuously differentiable on the open strip $(0,t_*)\times \R^n.$ 
\end{thm}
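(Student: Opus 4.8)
The plan is to deduce the statement from the $C^1$-criterion recorded in Section~2: the Hopf-Lax solution $u$ is continuously differentiable on an open set $V\subset\Omega$ if and only if $\ell(t,x)$ is a singleton for every $(t,x)\in V$. Since $t_*<T$, the strip $V=(0,t_*)\times\R^n$ is open in $\Omega$, so it suffices to prove that $\ell(t_1,x_1)$ is a singleton for each fixed $t_1\in(0,t_*)$ and $x_1\in\R^n$. The elementary remark that drives everything is that, $\sigma$ being differentiable, $D^{\#}\sigma(y)=D^+\sigma(y)\cup D^-\sigma(y)=\{\sigma_y(y)\}$ for all $y$; consequently $y\in\ell^*(t,x)$ precisely when $x=y+tH_p(\sigma_y(y))=x(t,y)$, i.e. $\ell^*(t,x)$ is exactly the fibre $x(t,\cdot)^{-1}(x)$ of the characteristic map at time $t$. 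I will also use freely that $\ell(t,x)\neq\emptyset$ (by (H1)), that $\ell(t,x)\subset\ell^*(t,x)$ (Theorem 3.1), and the Fenchel duality relations $H_p\circ H^*_z=H^*_z\circ H_p=\mathrm{id}$.

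So suppose, aiming at a contradiction, that $y_1,y_2\in\ell(t_1,x_1)$ with $y_1\neq y_2$. From the computation in the proof of Theorem~3.1, $q_1:=H^*_z\big(\tfrac{x_1-y_1}{t_1}\big)\in D^-\sigma(y_1)$, hence $q_1=\sigma_y(y_1)$ and $x_1=y_1+t_1H_p(\sigma_y(y_1))=x(t_1,y_1)$; thus the generalized characteristic $\mathcal C_1:\ x=x(t,y_1)=y_1+tH_p(\sigma_y(y_1))$ is of type (I) at $(t_1,x_1)$ and it passes through the point $(t_*,x^*)$, where $x^*:=x(t_*,y_1)$. Note that $y_1\in\ell^*(t_*,x^*)$ automatically. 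I would then split into two cases, according to whether $\mathcal C_1$ is of type (I) at $(t_*,x^*)$.

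If $y_1\notin\ell(t_*,x^*)$: choose $y_3\in\ell(t_*,x^*)$ (nonempty); then $y_3\neq y_1$ while $y_3\in\ell^*(t_*,x^*)=x(t_*,\cdot)^{-1}(x^*)$, so $x(t_*,y_3)=x^*=x(t_*,y_1)$, contradicting the injectivity of $y\mapsto x(t_*,y)$. If instead $y_1\in\ell(t_*,x^*)$, then $\mathcal C_1$ is of type (I) at $(t_*,x^*)$, and Theorem~\ref{cha1} applied with $(t_0,x_0)=(t_*,x^*)$, $y_0=y_1$ gives $\ell(t,x)=\{y_1\}$ for all $(t,x)\in\mathcal C_1$ with $0\le t<t_*$; since $0<t_1<t_*$ and $(t_1,x_1)\in\mathcal C_1$, this forces $\ell(t_1,x_1)=\{y_1\}$, contradicting $y_2\in\ell(t_1,x_1)$, $y_2\neq y_1$. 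In both cases we get a contradiction, so $\ell(t_1,x_1)$ is a singleton; since $(t_1,x_1)$ was an arbitrary point of $(0,t_*)\times\R^n$, the $C^1$-criterion yields that $u$ is continuously differentiable on $(0,t_*)\times\R^n$.

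The step I expect to be the crux --- and the one explaining why injectivity at the single time $t_*$ propagates to all earlier times --- is the identification $\ell^*(t,x)=x(t,\cdot)^{-1}(x)$, which is exactly where the differentiability of $\sigma$ enters: a characteristic that ceases to be minimizing at time $t_*$ immediately yields a second preimage of $x^*$ under $x(t_*,\cdot)$, whereas the only other possibility (the characteristic staying of type (I) at $t_*$) forces, via Theorem~\ref{cha1}, a unique minimizer along that characteristic at every earlier time. Everything else is bookkeeping with Fenchel duality and with the inclusion $\ell\subset\ell^*$, and needs no extra hypothesis such as semiconcavity or $C^2$ regularity.
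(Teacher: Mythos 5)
Your proof is correct and follows essentially the same route as the paper's: extend the type-(I) characteristic through the given point up to the plane $t=t_*$, use the injectivity of $y\mapsto x(t_*,y)$ together with $\emptyset\neq\ell\subset\ell^*$ to force that characteristic to be of type (I) at $(t_*,x_*)$, invoke Theorem~\ref{cha1} to propagate the singleton property of $\ell$ back to the earlier time, and finish with the $C^1$-criterion of Theorem~2.8(c). Your case split merely makes explicit (and slightly cleaner) the step the paper compresses into ``since $\ell^*(t_*,x_*)$ is a singleton, so is $\ell(t_*,x_*)$.''
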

\begin{proof}
 
Let $(t_0,x_0)\in (0,t_*)\times \R^n$ and let $\mathcal C:$
$$ x=x_0+(t-t_0) H_p(p_0)$$
where $p_0\in D^\#\sigma(y_0),\ y_0\in \ell(t_0,x_0),$  be the characteristic curve going through $(t_0,x_0)$ defined as in Corollary 3.2.

Let $(t_*,x_*)$ be the intersection point of $\mathcal C$ and plane $\Delta^{t_*} :\ t=t_*.$ By assumption, the mapping $y\mapsto x(t_*,y)$ is injective and $\ell(t_*,x_*)\ne \emptyset,$ so there is unique  a characteristic curve passing $(t_*,x_*).$ This characteristic curve is exactly $\mathcal C.$ Therefore, we can rewrite $C$ as follows:
$$ x=x_*+(t_*-t )H_p(p_*)$$
where $p_*\in D^-\sigma(y_*)$ is unique defined with respect to $y_*\in \ell(t_*,x_*).$

Since $\ell^*(t_*,x_*)$ is a singleton, so is $\ell(t_*,x_*).$ Consequently, $\mathcal C$ is of type (I) at $(t_*,x_*)$ and $\ell(t,x) =\{y_*\}$ for all $(t,x)\in (0,t_*)\times \R^n,$  particularly at $(t_0,x_0)$ and then, $y_*=y_0.$ Applying Theorems \ref{di}, 2.8 b) we see that $u(t,x)$ is of class $C^1$ in $(0,t_*)\times \R^n.$
\end{proof}

Note that at some point $(t_0,x_0) \in \Omega$ where $u(t,x)$ is differentiable there may be more than one characteristic curve goes through, that is $\ell^*(t_0,x_0)$ may not be a singleton. Next, we have:
\medskip

\begin{thm} Assume (H0), (H1). Moreover, let $\sigma$  be Lipschitz and of class $C^1(\R^n).$ Suppose that $\ell(t_*,x)$ is a singleton for every point of the plane $\Delta^{t_*} = \{(t_*,x)\in \R^{n+1}:\  x\in \R^n\},\ 0<t_*\le T.$ Then the function $u(t,x)$ defined
by Hopf-Lax formula (\ref{hl1}) is continuously differentiable on the open strip
 $(0,t_*) \times \R^n.$
\end{thm}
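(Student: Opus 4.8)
The plan is to reduce this statement to the previous theorem, Theorem~\ref{di1}, by showing that the hypothesis here --- that $\ell(t_*,x)$ is a singleton for every $x\in\R^n$ --- forces the map $y\mapsto x(t_*,y)=y+t_*H_p(\sigma_y(y))$ to be injective on $\R^n$. Once injectivity on the slice $\Delta^{t_*}$ is established, Theorem~\ref{di1} applies verbatim and yields that $u$ is of class $C^1$ on $(0,t_*)\times\R^n$. So the whole argument concentrates on the injectivity claim.

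First I would note that since $\sigma\in C^1(\R^n)$, for every $y$ the set $D^\#\sigma(y)$ contains $\sigma_y(y)$, and in fact (using that $\sigma$ is differentiable) one has $D^-\sigma(y)\subseteq\{\sigma_y(y)\}$; combined with Theorem~3.1, for a point $x$ on the plane $\Delta^{t_*}$ the relation $y\in\ell(t_*,x)$ forces $q=H^*_z\big(\frac{x-y}{t_*}\big)=\sigma_y(y)$, hence $x=y+t_*H_p(\sigma_y(y))=x(t_*,y)$. In other words, $y\in\ell(t_*,x)$ implies $x=x(t_*,y)$. Now suppose toward a contradiction that $y_1\ne y_2$ but $x(t_*,y_1)=x(t_*,y_2)=:x$. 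The key point is that every $y\in\ell(t_*,x')$ satisfies $x'=x(t_*,y)$, but the converse --- that $x=x(t_*,y_i)$ forces $y_i\in\ell(t_*,x)$ --- need not hold a priori, since $y_i$ might only lie in $\ell^*(t_*,x)\setminus\ell(t_*,x)$. So the main obstacle is exactly this: I must show that at least two of the candidate points $y$ with $x(t_*,y)=x$ actually belong to $\ell(t_*,x)$, contradicting that $\ell(t_*,x)$ is a singleton.

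To handle that obstacle I would argue as follows. The set $\ell(t_*,x)$ is nonempty for every $x$ (by (H1) and lower semicontinuity of $\zeta(t_*,x,\cdot)$), so pick $\bar y\in\ell(t_*,x)$; then $x=x(t_*,\bar y)$. If $\bar y\notin\{y_1,y_2\}$ we already have three distinct preimages and in particular a preimage that is not in the singleton $\ell(t_*,x)$ is harmless; what I really want is to produce a second element of $\ell(t_*,x)$. The way to do this is to travel backward along characteristics: consider a point $(t_0,x_0)$ on the characteristic line through $(t_*,x)$ emanating from $y_1$, i.e. $x_0=y_1+t_0H_p(\sigma_y(y_1))$ for small $t_0$. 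By Theorem~3.1 any $y\in\ell(t_0,x_0)$ gives a characteristic through $(t_0,x_0)$, and by the uniqueness-of-backward-characteristic statement in Corollary~3.2 together with Theorem~\ref{cha1}, for $t_0$ small this characteristic must be the one through $y_1$, forcing $\ell(t_0,x_0)=\{y_1\}$ for $t_0$ sufficiently small; running the same argument from $y_2$ on its characteristic shows the corresponding $\ell$ is $\{y_2\}$. One then pushes these points forward to time $t_*$ using Lemma~\ref{ev} and the type-(I) propagation of Theorem~\ref{cha1}: each of $y_1,y_2$ remains a minimizer at $(t_*,x)$, i.e. $y_1,y_2\in\ell(t_*,x)$, contradicting that $\ell(t_*,x)$ is a singleton. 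Hence $y\mapsto x(t_*,y)$ is injective.

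With injectivity of $y\mapsto x(t_*,y)$ in hand, I would invoke Theorem~\ref{di1} directly: its hypotheses are now met, so $u(t,x)$ is continuously differentiable on $(0,t_*)\times\R^n$, which is the assertion. I expect the delicate point to be the backward/forward propagation bookkeeping --- making sure that "minimizer at a small time on the characteristic" really does propagate to "minimizer at $t_*$" --- but this is precisely the content already packaged in Theorem~\ref{cha1} (type-(I) characteristics keep $y_0\in\ell(t,x)$ for $0<t\le t_0$) applied with the roles of the times interchanged, so no new estimate is needed beyond what the cited results provide.
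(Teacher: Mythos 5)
Your proposal hinges on reducing the statement to Theorem~\ref{di1} by proving that the singleton hypothesis on $\ell(t_*,\cdot)$ forces $y\mapsto x(t_*,y)$ to be injective, and the argument you give for that implication does not work. The decisive false step is the claim that $\ell(t_0,x_0)=\{y_1\}$ for $t_0$ small along the characteristic emanating from an arbitrary $y_1$. A characteristic can be of type (II) at \emph{every} positive time: take $H(p)=\tfrac12|p|^2$ and $\sigma\in C^1$, Lipschitz, with $\sigma(y)=-|y|^{3/2}$ near $y_1=0$; then along $x(t,0)\equiv 0$ one has $\zeta(t,0,y)=-|y|^{3/2}+\tfrac{|y|^2}{2t}=|y|^{3/2}\bigl(\tfrac{|y|^{1/2}}{2t}-1\bigr)<0=\zeta(t,0,0)$ for $0<|y|<4t^2$, so $0\notin\ell(t,0)$ for any $t>0$. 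Corollary~3.2 gives uniqueness of the line through $(t_0,x_0)$ \emph{for a given} $y\in\ell(t_0,x_0)$, not uniqueness of $y$, so it cannot rescue this step. Worse, your final move ``pushes these points forward to time $t_*$'' invokes Theorem~\ref{cha1} in the direction in which it is silent: that theorem propagates membership in $\ell$ \emph{backward} in time (from $t_0$ down to $0$), and forward propagation of minimizers is precisely what fails when shocks form. Consequently the contradiction $y_1,y_2\in\ell(t_*,x)$ is never reached, and indeed the implication ``$\ell(t_*,x)$ a singleton for all $x$ $\Rightarrow$ $y\mapsto x(t_*,y)$ injective'' is doubtful for merely $C^1$ Lipschitz data: non-minimizing (type (II)) characteristics may cross at time $t_*$ without ever producing a double minimizer. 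This is why the paper states Theorems~\ref{di1} and the present one as two separate results with different hypotheses and different proofs.

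The paper's own argument runs in the opposite direction and avoids injectivity altogether: given $(t_0,x_0)\in(0,t_*)\times\R^n$, it seeks a point $x_*$ on the plane $t=t_*$ whose (unique, type (I)) characteristic passes through $(t_0,x_0)$. Writing $\ell(t_*,z)=\{p(z)\}$, the map $z\mapsto p(z)$ is continuous (u.s.c.\ plus single-valuedness), so $\Lambda(z)=x_0-(t_*-t_0)H_p(\sigma_y(p(z)))$ is a continuous self-map of a closed ball (here the Lipschitz bound on $\sigma$ is used), and Brouwer's theorem yields a fixed point $x_*$ with $x_0=x_*+(t_0-t_*)H_p(\sigma_y(p(x_*)))$. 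Then Theorem~\ref{cha1}, applied in its legitimate backward direction from $(t_*,x_*)$, gives that $\ell(t_0,x_0)$ is a singleton, and Theorem~\ref{di} concludes. If you want to keep your overall structure, you would have to replace the injectivity reduction by an existence argument of this fixed-point type; as written, the proof has a genuine gap.
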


\begin{proof}  Let $(t_0,x_0) \in (0,t_*)\times \R^n.$
Since $\sigma(x)$ is Lipschitz  on $\R^n$ then there exists $m>0$ such that $|\sigma_y(y)| \le m$ for all $y\in \R^n.$
\smallskip

By assumption, for each $z\in \R^n,$ there exists unique $p(z)\in \ell(t_*,z)$ such that $\mathcal C_z :\ x(t) =z+(t-t_*)H_p(\sigma (p(z))$ is the unique characteristic curve of type (I)  going through $(t_*, z).$ Since the multivalued function  $z\mapsto \ell(t_*,z)$ is u.s.c; see \cite{vht} and, by assumption $\ell(t_*,z)=\{p(z)\} $ is a singleton for all $z\in \R^n,$ then $z\mapsto p(z)$ is continuous.

Consider the following mapping $\Lambda:\ \R^n \to \R^n$ defined by
$$\Lambda (z)=x_0-(t_*-t_0) H_p( \sigma_y(p(z)), \ \forall \ z\in \R^n,$$
where $p(z)\in \ell(t_*,z).$  Then $\Lambda (z)$ is also continuous on $\R^n.$
\smallskip

Since $\sigma_y(p(z))$ is bounded and $H_p(p)$ is continuous, there exists $M>0$ such that
$$|\Lambda (z)-x_0|\le |t_*-t_0| |H_p(\sigma_y(p(z))| \le M,\ \forall z\in \R^n.$$
Therefore $\Lambda$ is a continuous function from the closed ball $B'(x_0,M)$ into itself. By Brouwer theorem, $\Lambda $ has a fixed point $x_*\in B'(x_0,M), $ i.e., $\Lambda (x_*)=x_*,$ hence 
$$x_0=x_*+(t_*-t_0) H_p(\sigma_y(p(x_*))$$

In other words, there exists a characteristic curve $C$ of the type (I) at $(t_*,x_*)$ described as in Theorem \ref{cha1} passing $(t_0,x_0)$. Since $\ell(t_*,x_*)$ is a singleton, so is $\ell(t_0,x_0).$ Applying Theorem \ref{di}, we see that $u(t,x)$ is continuously differentiable in $(0,t_*)\times\R^n.$
\end{proof}

We note that the hypotheses of above theorems are equivalent to the fact that, there is a unique characteristic curve of type (I) at points $(t_*,x_*),\ x_*\in \R^n$ going through $(t_0,x_0).$ This is also equivalent to the fact that the function $\varphi(x) =u(t_*,x)$ is differentiable on $\R^n.$ In general,
at some point $(t_0,x_0) \in (0,t_*)\times \R^n$ where $u(t,x)$ is
differentiable there may be more than one characteristic curves of
type (I) or (II) at point $(t_*,x_*)$ as above going through $(t_0,x_0),$  that is
$\ell^*(t_*,x_*)$ may not be a singleton. Even neither is
$\ell(t_*,x_*).$ Nevertheless, we have:
\medskip

\begin{thm} Assume (H0), (H1).  Let $u(t,x)$ be the viscosity solution of the problem (\ref{2.1})-(\ref{2.2}) defined by Hopf-Lax formula. Suppose that there exists $t_* \in (0,T)$ such that all characteristic curves passing $(t_*,x),\ x\in \R^n$ are of type (I). Then $u(t,x)$ is continuously differentiable on the open strip $(0,t_*)\times \R^n.$
\end{thm}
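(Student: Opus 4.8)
The plan is to show that the hypothesis forces $\ell(t_*,x)$ to be a singleton for every $x\in\R^n$, and then to invoke the previous regularity theorem (or, more directly, Theorem~\ref{di} together with part (b) of Theorem~2.8). First I would fix $x_*\in\R^n$ and suppose, for contradiction, that $\ell(t_*,x_*)$ contains two distinct points $y_1\ne y_2$. For each $i$ we have $q_i=H^*_z\big(\frac{x_*-y_i}{t_*}\big)\in D^-\sigma(y_i)$ by Theorem~3.1, so the generalized characteristic $\mathcal C_i:\ x=x_*+(t-t_*)H_p(q_i)$ is of type (I) at $(t_*,x_*)$. Since $y_1\ne y_2$ and the characteristics are straight lines through $(0,y_i)$ and $(t_*,x_*)$, the two directions $H_p(q_1)$, $H_p(q_2)$ must differ (otherwise $y_1=x_*-t_*H_p(q_1)=x_*-t_*H_p(q_2)=y_2$). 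By Theorem~\ref{cha1}, along each $\mathcal C_i$ we have $\ell(t,x)=\{y_i\}$ for $0\le t<t_*$, and in particular $u$ is differentiable there with $D_xu=q_i$ and $u_t=-H(q_i)$. This already gives a contradiction: the hypothesis that every characteristic through $(t_*,x)$ is of type (I) is vacuous unless we also know something pins down the set; so the real content must be extracted differently.

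A cleaner route is to observe that the hypothesis, applied to \emph{all} points of the plane $\Delta^{t_*}$, is exactly the statement that $\ell^*(t_*,x)=\ell(t_*,x)$ for every $x$, i.e.\ no type-(II) characteristic reaches height $t_*$. I would then argue as in the proof of Theorem~4.2: given $(t_0,x_0)\in(0,t_*)\times\R^n$, pick $y_0\in\ell(t_0,x_0)$ and the type-(I) characteristic $\mathcal C$ through $(t_0,x_0)$ from Corollary~3.2; extend $\mathcal C$ up to its intersection point $(t_*,x_*)$ with $\Delta^{t_*}$. At $(t_*,x_*)$ the curve $\mathcal C$ is, by hypothesis, of type (I), i.e.\ $y_0\in\ell(t_*,x_*)$. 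Now apply Theorem~\ref{cha1} at $(t_*,x_*)$: it yields $\ell(t,x)=\{y_0\}$ for all $(t,x)\in\mathcal C$ with $0\le t<t_*$, in particular $\ell(t_0,x_0)=\{y_0\}$ is a singleton. Then Theorem~\ref{di} gives that $u$ is differentiable at $(t_0,x_0)$, and since this holds at every point of the open strip, part (c) of Theorem~2.8 upgrades this to $u\in C^1\big((0,t_*)\times\R^n\big)$.

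The step I expect to be the main obstacle is justifying that $\mathcal C$, once extended, really \emph{does} hit $\Delta^{t_*}$ at a point $x_*$ whose fiber $\ell(t_*,x_*)$ is controlled — more precisely, one must be careful that $\ell(t_0,x_0)$ is not a priori a singleton, so a priori different choices of $y_0\in\ell(t_0,x_0)$ give different curves $\mathcal C$ and different intersection points $x_*$; the argument must show each such $y_0$ lies in the corresponding $\ell(t_*,x_*)$ and then run Theorem~\ref{cha1} to conclude each is forced to be isolated below $t_*$, hence $\ell(t_0,x_0)$ was a singleton after all. The remaining subtlety is the backward-uniqueness of the straight-line characteristic, which is exactly the geometric content of Corollary~3.2 and Lemma~\ref{ev}, so no new estimate is needed; the proof is essentially a reduction to Theorems~\ref{cha1} and~\ref{di}.
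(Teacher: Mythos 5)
Your second argument is exactly the paper's proof: extend the type-(I) characteristic through $(t_0,x_0)$ up to its intersection $(t_*,x_*)$ with $\Delta^{t_*}$, use the hypothesis to conclude it is of type (I) there, apply Theorem~\ref{cha1} at $(t_*,x_*)$ to get $\ell(t_0,x_0)=\{y_0\}$, and conclude via Theorem~\ref{di}. The abandoned contradiction attempt in your first paragraph is an unnecessary detour and can simply be deleted.
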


\begin{proof}
We argue similarly to the proof of Theorem \ref{di1}. Let $(t_0,x_0)\in (0,t_*)\times \R^n$ and let $\mathcal C:$
$$ x=x_0+(t-t_0) H_p(p_0)$$
where $p_0\in D^-\sigma(y_0),\ y_0\in \ell(t_0,x_0)$  be the characteristic curve going through $(t_0,x_0)$ defined as above.
\smallskip

Let $(t_*,x_*)$ be the intersection point of $\mathcal C$ and plane $\Delta^{t_*} :\ t=t_*.$  Then we have
$$ x_*=x_0+(t_*-t_0) H_p(p_0)$$
Therefore, we can rewrite $\mathcal C$ as
$$ x=x_*-(t_*-t_0)H_p(p_0) +(t-t_0) H_p(p_0) = x_*+(t-t_*) H_p(p_0)$$
to see that it is also a characteristic curve passing$(t_*,x_*).$ By assumption, $\mathcal C$ is of type (I) at this point, so it is of type (I) for all $(t,x)\in \mathcal C, \ 0\le t <t_*,$ by Theorem \ref{cha1}. Thus, $\ell(t_0,x_0)$ is a singleton. As before, we come to the conclusion of the theorem.
\end{proof}

The following corollary establishes the existence of a strip of differentiability of viscosity solution of the problem (\ref{2.1})-(\ref{2.2}) defined by Hopf-Lax formula.

\begin{cor}
Assume (H0), (H1). In addition, suppose that the function $H=H(p)$ is both uniformly convex and semiconcave; $\sigma$ is semiconvex. Then there is the greatest number $t_*\in (0, T]$ such that the Hopf-Lax formula is of class $C^1((0,t_*)\times\R^n).$ 
\end{cor}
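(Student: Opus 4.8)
The plan is to show that, for all sufficiently small times $t_0$, the slice $x\mapsto u(t_0,x)$ is simultaneously semiconcave and semiconvex with linear modulus on $\R^n$, hence everywhere differentiable, and then to promote this to joint continuous differentiability on a strip. First I would read off the consequences of the hypotheses. Since $H$ is uniformly convex, Theorem 2.8(b) gives that $u(t,\cdot)$ is semiconcave on $\R^n$ for every $t\in(0,T]$. Since $H$ is semiconcave (say with semiconcavity constant $\theta^{-1}>0$) and $\sigma$ is semiconvex, Theorem \ref{sc} produces a threshold $\bar t\in(0,T)$ such that $u(t_0,\cdot)$ is semiconvex on $\R^n$ for every $t_0\in(0,\bar t)$. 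Thus, for each fixed $t_0\in(0,\bar t)$, the function $v:=u(t_0,\cdot)$ is both semiconcave and semiconvex with linear modulus.

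Next I would turn this into differentiability and then into $C^1$-regularity. A semiconcave function has nonempty superdifferential at every point and a semiconvex function has nonempty subdifferential at every point; at any $x_0$ where $D^+v(x_0)\neq\emptyset$ and $D^-v(x_0)\neq\emptyset$ the function $v$ is differentiable, with $D^+v(x_0)=D^-v(x_0)=\{Dv(x_0)\}$, by the elementary sandwich argument (if $p\in D^+v(x_0)$ and $q\in D^-v(x_0)$, then $\langle p-q,h\rangle\le o(|h|)$ for all small $h$, forcing $p=q$). Hence $u(t_0,\cdot)$ is differentiable at every $x_0\in\R^n$ whenever $t_0\in(0,\bar t)$. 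By the Remark following Theorem \ref{di}, differentiability of $u(t_0,\cdot)$ at $x_0$ forces $\ell(t_0,x_0)$ to be a singleton and then $u$ to be differentiable at $(t_0,x_0)$ as a function of $(t,x)$. Since this holds at every point of the open set $(0,\bar t)\times\R^n$, Theorem 2.8(c) yields $u\in C^1\big((0,\bar t)\times\R^n\big)$.

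Finally I would extract the greatest admissible time. Set $\mathcal T:=\{t\in(0,T]:\ u\in C^1((0,t)\times\R^n)\}$. By the previous step $\bar t\in\mathcal T$, so $\mathcal T\neq\emptyset$; moreover $\mathcal T$ is downward closed, since if $u$ is $C^1$ on $(0,t)\times\R^n$ and $0<s<t$ it is $C^1$ on the smaller strip. Hence $\mathcal T$ is an interval with supremum $t_*:=\sup\mathcal T\in(0,T]$. Because continuous differentiability is a local property and $(0,t_*)\times\R^n=\bigcup_{t\in\mathcal T}(0,t)\times\R^n$, we get $u\in C^1((0,t_*)\times\R^n)$, i.e. $t_*\in\mathcal T$; this $t_*$ is the asserted greatest number.

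The only step that is not pure bookkeeping with Theorems 2.8, \ref{di}, \ref{sc} is the implication ``semiconcave $+$ semiconvex $\Rightarrow$ differentiable'', and that is where I expect the (modest) work to be; I would either cite it from \cite{cs} or derive it in a few lines from Proposition \ref{p1}(iii) together with the standard fact that a concave, respectively convex, function has nonempty super-, respectively sub-, differential at every interior point. As an alternative to invoking Theorem 2.8(b), one may argue more directly: by Proposition \ref{scc} the conjugate $H^*$ is uniformly convex with constant $\theta$, so $y\mapsto tH^*\big(\frac{x-y}{t}\big)$ is uniformly convex with constant $\theta/t$; for $t$ below a threshold depending only on $\theta$ and the semiconvexity constant of $\sigma$, this strictly dominates that constant, which makes $\zeta(t,x,\cdot)$ strictly convex, so $\ell(t,x)$ is a singleton on the corresponding strip and Theorem 2.8(c) again applies.
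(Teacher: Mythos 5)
Your proposal is correct and follows essentially the same route as the paper: combine semiconcavity of $u(t,\cdot)$ (from uniform convexity of $H$, Theorem 2.8(b)) with semiconvexity for small $t$ (Theorem \ref{sc}) to get differentiability of the slices, pass to singleton $\ell$ and $C^1$ regularity on a strip, and then take the supremum. The only cosmetic difference is that you invoke Theorem 2.8(c) on the whole strip where every slice is differentiable, while the paper propagates backward from a single differentiable time-slice via Theorem 4.2; your treatment of the supremum step and of the ``semiconcave $+$ semiconvex $\Rightarrow$ differentiable'' implication is in fact more careful than the paper's.
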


\begin{proof} Since $H$ is uniformly convex, then for each $t\in (0,T],$ the function $u(t,\cdot)$ where $u(t,x)$ defined by Hopf-Lax formula, is a semiconcave, see \cite{ev}. On the other hand, by Theorem \ref{scc}, there exists $t_0\in (0,T]$ such that $u(t_0,\cdot)$ is semiconvex. Therefore, $u(t_0,\cdot)$ is differentiable on the plane $t=t_0,$ and in virtue of Theorem 4.2, $u(t,x)$ is continuously differentiable on the strip $(0,t_0)\times \R^n.$
\smallskip

Let $t_*=\sup\{t\in (0,T]\; | \; u(t,\cdot)\  \text{is differentiable on}\ \R^n\}.$ Then arguing as above, we see that $u(t,x)\in C^1((0,t_*)\times\R^n).$
\end{proof}

\begin{rem}

 1. If $H\in C^2(\R^n)$ and there is $\alpha >0$ such that $\frac 1\alpha I\le D^2H\le \alpha I$ where $D^2H$ is the Hessian of $H$ and $I$ is the $(n\times n)$-unit matrix, then $H$  is both uniformly convex and semiconcave.

\smallskip
2. The above result can be considered as a version of the existence of the classical  solution of the problem (\ref{2.1})-(\ref{2.2}) under assumption that the data are only of class $C^1.$ (cf. Corollary 1.5.5, \cite{cs}.)
\end{rem}

{\bf An Application:} {\it  When backward solution is forward solution}
\smallskip

It should be noted that, a function $u(t,x)\in C^1([0,T]\times\R^n)$ is a classical solution of  the equation $u_t+H(t,x,D_xu)=0$ if and only if, the function $v(t,x)=u(T-t,x)$ is a classical solution of the equation $v_t-H(T-t,x,D_xv)=0.$ This statement is not true in general, if $u(t,x)$ is merely a viscosity solution of the equation.

\smallskip
In \cite{bcjs} the authors introduce the notion ``forward'' and ``backward'' viscosity solution of a Hamilton-Jacobi equation of the form $u_t+H(t,x,Du)=0$ as follows. The function $u(t,x)\in C([0,T]\times \R^n)$ is called a forward solution of the equation $u_t+H(t,x,D_xu)=0$ if it is a viscosity in the usual sense, while $u(t,x)$ is said to be a backward solution of the equation, if $v(t,x)=u(T-t,x)$ is a viscosity solution of the equation $v_t-H(T-t,x,D_xv)=0.$

\smallskip
For a comprehensive knowledge of this topic, we refer the readers to the paper just mentioned above. Here we would like review the case $u_t +H(Du)=0$ where the viscosity solution is defined by Hopf-Lax formula.

\smallskip
Following the presentation in \cite{bcjs}, sect. 4, let function $w(t,x)$ be a backward viscosity solution of the problem $w_t +H(Dw)=0$ with terminal condition $w(T,x)=g(x),$ i.e., the function $v(t,x)=w(T-t,x)$ is the viscosity solution of the problem $v_t -H(Dv)=0$ with initial condition $v(0,x)=g(x).$ We reproduce the following theorem and give another proof for continuous differentiability of $u(t,x)$ in the statement (ii). Note that, the origin proof is rather long but direct, ours seems to be ``shorter'' since it is inherited several intermediate results.

\begin{thm}\label{fb}
Assume $H:\ \R^n\to \R$ is of class $C^1(\R^n),$ convex and superlinear, $g\in C^1(\R^n)$ and is Lipschitz. Let $w(t,x)$ be the backward viscosity solution  of the problem $w_t+H(Dw)=0; \ w(T,x)=g(x)$ and let $u(t,x)$ be the (forward) viscosity solution of the problem $u_t+H(Du)=0; \ u(0,x)=w(0,x).$ Then
\smallskip

(i) A necessary and sufficient condition for $u(T,x)=g(x)$ is
\begin{equation}\label{bf} g(x)=\min_z\max_y\Big \{g(y)-TH^*\Big (\frac{y-z}{T}\Big)+TH^*\Big(\frac{x-z}{T}\Big)\Big\}.\end{equation}

\smallskip
(ii) If (\ref{bf}) holds, then $u(t,x)=w(t,x),\ (t,x)\in [0,T]\times \R^n$ and $u$ is continuously differentiable on $(0,T]\times \R^n.$
\end{thm}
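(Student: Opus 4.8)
The plan is to treat the two parts of Theorem~\ref{fb} separately, using the machinery of backward/forward Hopf--Lax formulas. First I would write down explicitly the two representations involved. Since $v(t,x)=w(T-t,x)$ solves $v_t-H(Dv)=0$ with $v(0,x)=g(x)$, the concave-Hamiltonian Hopf--Lax formula (\ref{hl2}) gives
\[
v(t,x)=\max_{z\in\R^n}\Big\{g(z)-tH^*\Big(\frac{z-x}{t}\Big)\Big\},
\]
so that $w(0,x)=v(T,x)=\max_z\{g(z)-TH^*(\frac{z-x}{T})\}$. On the other hand $u(t,x)$ is the (forward) Hopf--Lax solution with initial datum $\sigma(x):=w(0,x)$, hence
\[
u(t,x)=\min_{\xi\in\R^n}\Big\{w(0,\xi)+tH^*\Big(\frac{x-\xi}{t}\Big)\Big\}
       =\min_{\xi}\min_{z}\Big\{g(z)+tH^*\Big(\frac{x-\xi}{t}\Big)-TH^*\Big(\frac{z-\xi}{T}\Big)\Big\}.
\]
Wait --- one must be careful with the inner $\max$ versus $\min$: substituting $w(0,\xi)=\max_z\{g(z)-TH^*(\frac{z-\xi}{T})\}$ into the outer $\min_\xi$ yields $u(T,x)=\min_\xi\max_z\{g(z)-TH^*(\frac{z-\xi}{T})+TH^*(\frac{x-\xi}{T})\}$, which is exactly the right-hand side of (\ref{bf}) after relabelling $\xi\to z$, $z\to y$. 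So for part (i) it suffices to check that $u(T,x)\le g(x)$ always holds (take $\xi=x$ in the outer minimum and use that the inner maximum at $\xi=x$ is $w(0,x)$, then apply $u(T,x)\le w(0,x)+0$... more precisely use the semigroup/comparison structure), so that $u(T,x)=g(x)$ is equivalent to the reverse inequality, which unwinds to (\ref{bf}). The inequality $u(T,\cdot)\le g$ is the general fact that "forward after backward" contracts; I would prove it by plugging the maximizer into the formulas, or cite the corresponding estimate from \cite{bcjs}.

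For part (ii), assume (\ref{bf}) holds. First I would argue $u\equiv w$ on $[0,T]\times\R^n$: both are viscosity solutions of $u_t+H(Du)=0$ on $(0,T)\times\R^n$; they agree at $t=0$ by construction ($u(0,\cdot)=w(0,\cdot)$); and (\ref{bf}) forces them to agree at $t=T$ as well by part (i). Since $H=H(p)$ is convex and superlinear, the forward Cauchy problem has a unique viscosity solution, so $u\equiv w$ once the data match; the role of (\ref{bf}) is precisely to guarantee that $w$ --- which a priori is only known to solve the equation forward on $(0,T)$ with the given datum at $t=0$ --- is genuinely \emph{the} forward solution, i.e. has no discrepancy at the terminal time. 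The substantive claim is then the $C^1$ regularity of $u$ on $(0,T]\times\R^n$.

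To get differentiability I would invoke Theorem~4.2 (or Theorem~\ref{di1}) of this paper: it suffices to show that $\ell(T,x)$ is a singleton for every $x\in\R^n$, equivalently that $u(T,\cdot)=g$ is differentiable on $\R^n$, which holds since $g\in C^1(\R^n)$. Concretely: $u(T,\cdot)=g$ is $C^1$, so by the Remark following Theorem~2.9 (the $x$-differentiability of $u(t_*,\cdot)$ at every point) $\ell(T,x)$ is a singleton for all $x$; hence by Theorem~4.2, with $t_*=T$, the function $u(t,x)$ is continuously differentiable on the open strip $(0,T)\times\R^n$. To extend continuous differentiability up to and including the slice $t=T$, I would note that $\ell(T,x)$ being a singleton gives differentiability of $u$ at each $(T,x)$ by Theorem~\ref{di}, and then continuity of $Du$ up to $t=T$ follows from the fact that $\ell(t,x)$ is single-valued on a neighbourhood (in the closed strip) of each $(T,x_0)$ together with upper semicontinuity of $(t,x)\mapsto\ell(t,x)$; since $Du(t,x)$ is the continuous function of the unique element of $\ell(t,x)$ given in the Remark after Theorem~\ref{rg}, it extends continuously to $t=T$.

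The main obstacle I anticipate is part (ii)'s boundary behaviour at $t=T$ rather than part (i): Theorem~4.2 as stated produces $C^1$ on the \emph{open} strip $(0,t_*)\times\R^n$, and one genuinely needs the slight upgrade to include $t=T$, which requires knowing that $\ell$ stays single-valued in a full (relative) neighbourhood of the terminal slice, not merely on it. Establishing that --- e.g. by observing that if $\ell(T,x_0)=\{y_0\}$ then for $(t,x)$ near $(T,x_0)$ any $y\in\ell(t,x)$ must be near $y_0$ by upper semicontinuity, and combined with $\ell(T,x_0)$ being a singleton one deduces uniqueness persists --- is the delicate point; the alternative is to cite the corresponding regularity statement directly from \cite{bcjs} for the endpoint. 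A secondary point worth double-checking is the exact form of (\ref{bf}): one must verify that the Fenchel-conjugate bookkeeping between the concave formula (\ref{hl2}), which naturally produces $(-K)^*$ with argument $\frac{y-x}{t}$, and the stated $H^*$ with argument $\frac{x-z}{T}$ is consistent, so that the nested $\min\max$ really reproduces $u(T,x)$ exactly.
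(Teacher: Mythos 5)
Your treatment of the part the paper actually proves --- the $C^1$ regularity --- matches the paper's own argument: the paper likewise observes that $u(T,\cdot)=g$ is differentiable, hence $\ell(T,x)$ is a singleton for every $x$, and applies Theorem 4.2 with $t_*=T$. You go further than the paper in two useful ways. First, you derive (i) directly by composing the backward formula (\ref{hl2}) with the forward formula (\ref{hl1}), so that the right-hand side of (\ref{bf}) is literally $u(T,x)$ and the equivalence in (i) is immediate; the paper simply cites \cite{bcjs} for this, and your reconstruction is correct (the Fenchel bookkeeping works out: $(-K)^*=H^*$ with $K=-H$, giving $w(0,z)=\max_y\{g(y)-TH^*(\frac{y-z}{T})\}$). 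Second, you correctly flag that Theorem 4.2 only yields $C^1$ on the \emph{open} strip $(0,T)\times\R^n$, whereas the statement asserts $(0,T]\times\R^n$; the paper glosses over the terminal slice, and your proposed fix (differentiability at each $(T,x)$ from Theorem \ref{di} since $\ell(T,x)$ is a singleton, plus continuity of the gradient from upper semicontinuity of $(t,x)\mapsto\ell(t,x)$ and the explicit formula for $Du$ in terms of the unique minimizer) is the right way to close it.

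There is, however, one genuine flaw in your argument for $u\equiv w$. You assert that ``both are viscosity solutions of $u_t+H(Du)=0$ on $(0,T)\times\R^n$'' and then invoke uniqueness of the forward Cauchy problem. But a backward viscosity solution is \emph{not} in general a forward viscosity solution: under the time reversal $v(t,x)=w(T-t,x)$ the sub- and superdifferential inequalities in the definition swap roles, so $w$ being a backward solution gives no forward viscosity information. If backward solutions were automatically forward solutions, the entire content of Theorem \ref{fb} (and of \cite{bcjs}) would be vacuous. The correct route --- which is what \cite{bcjs} does and what the paper defers to --- is to show that condition (\ref{bf}), i.e.\ the matching of $u$ and $w$ at \emph{both} endpoints $t=0$ and $t=T$, forces the nested min--max and max--min representations of $u$ and $w$ to collapse to the same value at every intermediate time. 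Matching at $t=0$ alone (which holds by construction) together with uniqueness for the forward problem tells you nothing about $w$; you genuinely need the argument of \cite{bcjs} here, and your hedge that you would ``cite the corresponding estimate'' is where the actual proof has to live.
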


\begin{proof} The proof of (i) and equality $u(t,x)=w(t,x)$ is taken from \cite{bcjs}. We only check that the function $u=u(t,x)$ is continuously differentiable in $(0, T]\times\R^n.$
Let $u(t,x)$ be a unique viscosity of the problem (2.1)-(2.2), where $\sigma(x)=w(0,x)$ defined by Hopf-Lax formula. By assumption, $u(T,x)=g(x)$ is a differentiable function on $\R^n,$ we apply Theorem 4.2 to get the desire result.
\end{proof}

\begin{cor}
Let the assumptions of Theorem \ref{fb} hold and let $g(x)$ be a semiconvex function. Suppose that $H\in C^2(\R^n)$ and is  uniformly convex. If $u(T,x)=g(x)$ then $u(t,x)=w(t,x),\ (t,x)\in [0,T]\times \R^n$ and $u$ is continuously differentiable on $(0,T]\times \R^n.$
\end{cor}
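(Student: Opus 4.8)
The plan is to reduce the corollary to Theorem~\ref{fb}(ii) by verifying its hypotheses, then invoke the regularity machinery already developed. The only thing that needs checking is that the standing assumptions of the corollary force the situation in which Theorem~\ref{fb} applies and that the conclusion there about $C^1$-smoothness on $(0,T]\times\R^n$ can be stated as is. Since we are \emph{given} $u(T,x)=g(x)$, condition (\ref{bf}) of Theorem~\ref{fb}(i) is automatically satisfied (it is equivalent to $u(T,\cdot)=g$), so part (ii) of that theorem yields immediately $u(t,x)=w(t,x)$ on $[0,T]\times\R^n$ and the continuous differentiability of $u$ on $(0,T)\times\R^n$. Thus the real content to address is differentiability \emph{up to and including} the terminal time slice $t=T$, and the role of the extra hypotheses ($g$ semiconvex, $H\in C^2$ uniformly convex) is to guarantee that $u(T,\cdot)$ is differentiable on all of $\R^n$.

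First I would record that uniform convexity of $H$ implies, via Theorem 2.8(b) (or the cited \cite{ev}), that $x\mapsto u(t,x)$ is semiconcave on $\R^n$ for every $t\in(0,T]$; in particular $v(x)=u(T,x)$ is semiconcave. Next, since we are assuming $u(T,x)=g(x)$ and $g$ is semiconvex by hypothesis, $v=g$ is simultaneously semiconcave and semiconvex, hence differentiable on $\R^n$ (a function that is both semiconcave and semiconvex with linear modulus is $C^{1,1}$). Therefore $\ell(T,x)$ is a singleton for every $x\in\R^n$ by the Remark following Theorem 2.8 (differentiability of $u(T,\cdot)$ at $x$ forces $\ell(T,x)$ to be a singleton).

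With $\ell(T,\cdot)$ single-valued everywhere on the slice $t=T$, Theorem 4.2 applies directly (its hypothesis is exactly that $\ell(t_*,x)$ is a singleton for every $x$ on the plane $t=t_*$, here with $t_*=T$, together with $\sigma=w(0,\cdot)$ being Lipschitz and $C^1$—the former follows from $g$ Lipschitz via the Hopf--Lax representation, the latter is part of the hypotheses of Theorem~\ref{fb}), and we conclude that $u(t,x)$ is continuously differentiable on $(0,T)\times\R^n$. To upgrade to $(0,T]\times\R^n$, I would combine this with Theorem~\ref{di}: at each $(T,x)$ the set $\ell(T,x)$ is a singleton, so $u$ is differentiable there too, and continuity of $Du$ up to $t=T$ follows from the explicit formula for $Du$ in terms of the (continuous, by upper semicontinuity and single-valuedness) selection $y_0=y_0(t,x)\in\ell(t,x)$ given in the Remark after Theorem~\ref{rg}. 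Finally, the identity $u=w$ on $[0,T]\times\R^n$ is quoted verbatim from Theorem~\ref{fb}(ii), whose hypotheses are met since (\ref{bf}) holds (being equivalent to the assumed $u(T,\cdot)=g$).

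The main obstacle, and the step I would be most careful about, is the passage from ``both semiconcave and semiconvex'' to ``differentiable'': one must make sure the two linear moduli are genuinely controlled (the semiconcavity constant of $u(T,\cdot)$ comes from the uniform convexity constant of $H$ through Prop.~\ref{scc} and the $t=T$ scaling, the semiconvexity constant is the given one for $g$), so that $g=u(T,\cdot)$ lies in $C^{1,1}_{loc}$ and hence in particular is everywhere differentiable; after that, every remaining step is a direct citation of Theorems~\ref{di}, 4.2 and \ref{fb}. A secondary point worth a line of care is that Theorem 4.2 as stated requires $\sigma$ Lipschitz and $C^1$ on $\R^n$—here $\sigma(x)=w(0,x)$, and one should note that $w(0,\cdot)$ inherits Lipschitz continuity and (under the present hypotheses) the regularity needed, exactly as in the proof of Theorem~\ref{fb}.
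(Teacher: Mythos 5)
Your proposal is correct and follows essentially the same route as the paper: uniform convexity of $H$ gives semiconcavity of $u(T,\cdot)$, which combined with the assumed semiconvexity of $g=u(T,\cdot)$ yields differentiability of the terminal slice, and then Theorem 4.2 delivers $C^1$ regularity on the strip. The only difference is that you explicitly patch the endpoint $t=T$ via Theorem \ref{di} and the singleton property of $\ell(T,x)$, a detail the paper's two-line proof glosses over; this is a welcome clarification rather than a departure.
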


\begin{proof} Following \cite{ev}, $u(t,x)$ is semiconcave in $x$ for fixed $t\in (0,T].$ Thus, $u(T,x)=g(x)$ is both semiconvex and semiconcave, then $u(T,x)$ is of class $C^1(\R^n).$ Applying Theorem 4.2, we see that $u(t,x)\in C^1((0,T])\times\R^n).$
\end{proof}

\end{document}